\documentclass[11pt]{amsart}
\usepackage{amsmath}
\usepackage[active]{srcltx}
\usepackage{t1enc}
\usepackage[latin2]{inputenc}
\usepackage{verbatim}
\usepackage{amsmath,amsfonts,amssymb,amsthm}
\usepackage[mathcal]{eucal}
\usepackage{enumerate}
\usepackage[centertags]{amsmath}
\usepackage{graphics}
\usepackage[active]{srcltx}

\setcounter{MaxMatrixCols}{10}

\newtheorem{theorem}{Theorem}

\newtheorem{lemma}{Lemma}

\newtheorem{corollary}{Corollary}

\begin{document}
\author{Gy\"orgy G\'at and Ushangi Goginava}
\title[Triangular Fej\'er Summability]{Triangular Fej\'er Summability of
Two-Dimensional Walsh-Fourier series}
\address{G. G\'at, Institute of Mathematics and Computer Science, College of
Ny\'\i regyh\'aza, P.O. Box 166, Nyiregyh\'aza, H-4400 Hungary }
\email{gatgy@nyf.hu}
\address{U. Goginava, Department of Mathematics, Faculty of Exact and
Natural Sciences, Tbilisi State University, Chavcha\-vadze str. 1, Tbilisi
0128, Georgia}
\email{zazagoginava@gmail.com}

\date{}
\maketitle

\begin{abstract}
It is proved that the operators $\sigma _{n}^{\bigtriangleup }$ of the
triangular-Fej{é}r-means of a two-dimensional Walsh--Fourier series are
uniformly bounded from the dyadic Hardy space $H_{p}$ to $L_{p}$ for all $%
4/5<p\leq \infty $.
\end{abstract}

\footnotetext{%
2010 Mathematics Subject Classification 42C10 .
\par
Key words and phrases: two-dimensional Walsh system, triangular means, Hardy spaces, norm
convergence.
\par
Research was supported by project T\'AMOP-4.2.2.A-11/1/KONV-2012-0051 and  by Shota Rustaveli National Science Foundation
grant no.13/06 (Geometry of function spaces, interpolation and embedding
theorems)}

\section{Introduction}

Lebesgue's \cite{leb} theorem is well known for trigonometric Fourier
series: the Fej{é}r means $\sigma _{n}f$ of $f$ converge to $f$ almost
everywhere if $f\in L_{1}(\mathbb{T}),\mathbb{T}:=[-\pi ,\pi )$ (see also
Zygmund \cite{zy}).

An analogous result for Walsh--Fourier series is due to Fine \cite{Fi}.
Later, Schipp \cite{Sc} showed that the maximal operator $\sigma ^{*}$ of
the Fejér means of the one-dimensional Walsh--Fourier series is of weak type
(1,1), from which the a.e. convergence follows by standard arguments.
Schipp's result implies by interpolation also the boundedness of $\sigma
^{*}:L_{p}\left( G\right) \rightarrow L_{p}\left( G\right)$, where $1<p\leq
\infty$. This fails to hold for $p=1$, but Fujii \cite{Fu} proved that $%
\sigma ^{*}$ is bounded from the dyadic Hardy space $H_{1}\left( G\right) $
to the space $L_{1}\left( G\right) $ (see also Simon \cite{Si1}). Fujii's
theorem was extended by Weisz \cite{We2}. Namely, he proved that $\sigma
^{*} $ is bounded from the martingale Hardy space $H_{p}\left( G\right) $ to
the space $L_{p}\left(G\right) $ for $p>1/2$. Simon \cite{Si2} gave a
counterexample, which shows that this boundedness does not hold for $%
0<p<1/2. $ In the endpoint case $p=1/2$, Weisz \cite{We4} proved that $%
\sigma ^{*}$ is bounded from the Hardy space $H_{1/2}\left( G\right) $ to
the space weak-$L_{1/2}\left( G\right) $. Goginava proved in \cite{GoEJA}
that the maximal operator of the Fej{\'e}r means of the one dimensional
Walsh--Fourier series is not bounded from the Hardy space $H_{1/2}\left(
G\right) $ to the space $L_{1/2}\left( G\right) $.

Marcinkievicz \cite{ma} verified for two-dimensional trigonometric Fourier
series that the Mar\-cinkiewicz-Fej{é}r means
\begin{equation*}
\sigma _{n}^{\Box }f=\frac{1}{n}\sum\limits_{j=0}^{n-1}S_{j}^{\Box }\left(
f\right)
\end{equation*}%
of a function $f\in L\log L(\mathbb{T}\times \mathbb{T})$ converge a.e. to $%
f $ as $n\rightarrow \infty $, where $S_{j}^{\Box }\left( f\right) $ denotes
the qubical partial sums of the Fourier series of $f$. Later Zhizhiashvili
\cite{zi2,zi1} extended this result to all $f\in L_{1}(\mathbb{T}\times
\mathbb{T})$.

An analogous result for two-dimensional Walsh--Fourier series is due to
Weisz \cite{We3}. Moreover, he proved that the maximal operator $\sigma
_{\ast }^{\Box }f=\sup\limits_{n\geq 1}\left\vert \sigma _{n}^{\Box
}f\right\vert $ is bounded from the dyadic martingale Hardy space $%
H_{p}\left( G\times G\right) $ to the space $L_{p}\left( G\times G\right) $
for $p>2/3$. The second author \cite{GoJAT, GoEJA} proved that the maximal
operator $\sigma _{\ast }^{\Box }$ is bounded from $H_{2/3}\left( G\times
G\right) $ to weak$-L_{2/3}\left( G\times G\right) $ and is not bounded from
$H_{2/3}\left( G\times G\right) $ to $L_{2/3}\left( G\times G\right).$

Weisz \cite{wel1-fs1,WeBud} studied the triangular partial sums and the Fej{é%
}r means
\begin{equation*}
\sigma _{{n}}^{\bigtriangleup }f=\frac{1}{{n}}\sum\limits_{j=0}^{{n}%
-1}S_{j}^{\bigtriangleup }f
\end{equation*}%
of the two-dimensional trigonometric Fourier series. This summability method
is rarely investigated in the literature (see the references in \cite%
{wel1-fs1}). In \cite{GoWe} it is proved that the maximal operator $\sigma
_{\#}^{\bigtriangleup }:=\sup\limits_{n}\left\vert \sigma _{{2}%
^{n}}^{\bigtriangleup }f\right\vert $ of the Fejér means of the triangular
partial sums of the double Walsh--Fourier series is bounded from the dyadic
Hardy space $H_{p}\left( G\times G\right) $ to the $L_{p}\left( G\times
G\right) $ if $p>1/2$, is bounded from $H_{1/2}\left( G\times G\right) $ to
the space weak- $L_{1/2}\left( G\times G\right) $ and it is not bounded from
$H_{1/2}\left( G\times G\right) $ to $L_{1/2}\left( G\times G\right) $.

For triangular partial sums it is well-known \cite{S-W-S} the operatos $%
S_{2^{A}}^{\triangle }$ are not unoformly bounded on $L_{p}$ for $1\leq
p\neq 2$.

It is proved that the operators $\sigma _{n}^{\bigtriangleup }$ of the
triangular-Fej{é}r-means of a two-dimensional Walsh--Fourier series are
uniformly bounded from the dyadic Hardy space $H_{p}$ to $L_{p}$ for all $%
4/5<p\leq \infty $.

The results for summability of quadratical partial sums of two-dimensional
Walsh-Fourier series can be found in \cite{GoJMA,GoSM,GGT,GGN}.

\section{Definitions and the notation}

Let $\mathbb{P}$ denote the set of positive integers, $\mathbb{N}\mathbf{:=}%
\mathbb{P}\mathbf{\cup \{}0\mathbf{\}.}$ Denote by $Z_{2}$ the discrete
cyclic group of order 2, that is $Z_{2}=\{0,1\},$ where the group operation
is the modulo 2 addition and every subset is open. A Haar measure on $Z_{2}$
is given such that the measure of a singleton is 1/2. Let $G$ be the
complete direct product of the countable infinite copies of the compact
groups $Z_{2}.$ The elements of $G$ are of the form $x=\left(
x_{0},x_{1},\dots ,x_{k},\dots \right) $ with $x_{k}\in \{0,1\}\left( k\in
\mathbb{N}\right).$ The group operation on $G$ is the coordinate-wise
addition, the measure (denoted by $\mu $) and the topology are the product
measure and topology. The compact Abelian group $G$ is called the Walsh
group. A base for the neighborhoods of $G$ can be given by
\begin{equation*}
I_{n}\left( x\right) :=\,I_{n}\left( x_{0},\dots ,x_{n-1}\right) :=\left\{
y\in G:\,y=\left( x_{0},\dots ,x_{n-1},y_{n},y_{n+1},\dots \right) \right\} ,
\end{equation*}%
where $I_{0}\left( x\right) :=G$ and $x\in G,n\in \mathbb{N}$. These sets
are called the dyadic intervals. Let $~0=\left( 0:i\in \mathbb{N}\right) \in
G$ denote the null element of $G$, $I_{n}:=I_{n}\left( 0\right) $ $\left(
n\in \mathbb{N}\right) $, $\overline{I}_{n}:=G\backslash I_{n}.$ Denote
\begin{equation*}
2^{N}x:=\left( x_{N},x_{N+1},...\right) ,x\in G.
\end{equation*}

For $k\in \mathbb{N}$ and $x\in G$ denote
\begin{equation*}
r_{k}\left( x\right) :=\left( -1\right) ^{x_{k}}\,\,\,\,\,\,\left( x\in
G,\;\;k\in \mathbb{N}\right)
\end{equation*}%
the $k$-th Rademacher function. If $n\in \mathbb{N}$, then $%
n=\sum\limits_{i=0}^{\infty }n_{i}2^{i},$ where $n_{i}\in \{0,1\}$ $\left(
i\in \mathbb{N}\right) $, i.e. $n$ is expressed in the number system of base
2. Denote $\left\vert n\right\vert :=\max \{j\in \mathbf{N:}n_{j}\neq 0\}$,
that is, $2^{\left\vert n\right\vert }\leq n<2^{\left\vert n\right\vert +1}.$

The Walsh--Paley system is defined as the sequence of Walsh--Paley
functions:
\begin{equation*}
w_{n}\left( x\right) :=\prod\limits_{k=0}^{\infty }\left( r_{k}\left(
x\right) \right) ^{n_{k}}=r_{\left\vert n\right\vert }\left( x\right) \left(
-1\right) ^{\sum\limits_{k=0}^{\left\vert n\right\vert
-1}n_{k}x_{k}}\,\,\,\,\,\,\left( x\in G,\;\;n\in \mathbb{P}\right).
\end{equation*}%
The Walsh--Dirichlet kernel is defined by
\begin{equation*}
D_{n}\left( x\right) =\sum\limits_{k=0}^{n-1}w_{k}\left( x\right)
,D_{0}\left( x\right) =0.
\end{equation*}%
Recall that (\cite{S-W-S})

\begin{equation}
\begin{split}
& D_{2^{n}}\left( x\right) =\left\{
\begin{array}{ll}
2^{n} & \mbox{if}\;\;x\in I_{n}, \\
0 & \mbox{if}\;\;x\in \overline{I}_{n}
\end{array},\right. \\
&  D_n(x) =-w_{n^{(i+1)}}(x)\left(\sum_{r=0}^ {i-1}n_r2^r - n_i2^i\right) \, \mbox{for}\, \,
x\in I_i\setminus I_{i+1}.
\label{dir}
\end{split}%
\end{equation}

In this paper we consider the double system $\left\{ w_{i}\left( x\right)
w_{j}\left( y\right) :i,j\in \mathbb{N}\right\} $ on $G\times G$.

The rectangular partial sums of the 2-dimensional Walsh--Fourier series are
defined as
\begin{equation*}
S_{M,N}f(x,y):=\sum\limits_{i=0}^{M-1}\sum\limits_{j=0}^{N-1}\widehat{f}%
\left( i,j\right) w_{i}\left( x\right) w_{j}\left( y\right) ,
\end{equation*}%
where the number
\begin{equation*}
\widehat{f}\left( i,j\right) =\int_{G\times G}f\left( x,y\right) w_{i}\left(
x\right) w_{j}\left( y\right) d\mu \left( x,y\right)
\end{equation*}%
is said to be the $\left( i,j\right) $th Walsh--Fourier coefficient of the
function \thinspace $f.$ Denote
\begin{equation*}
S_{M}^{\Box }f(x,y):=S_{M,M}f(x,y).
\end{equation*}

The triangular partial sums of the 2-dimensional Walsh--Fourier series are
defined as
\begin{equation*}
S_{k}^{\bigtriangleup
}f(x,y):=\sum\limits_{i=0}^{k-1}\sum\limits_{j=0}^{k-i-1}\widehat{f}\left(
i,j\right) w_{i}\left( x\right) w_{j}\left( y\right).
\end{equation*}%
Denote
\begin{equation*}
D_{k}^{\Box }\left( x,y\right) :=D_{k}\left( x\right) D_{k}\left( y\right)
\end{equation*}%
and
\begin{equation*}
D_{k}^{\bigtriangleup }\left( x,y\right)
:=\sum\limits_{i=0}^{k-1}\sum\limits_{j=0}^{k-i-1}w_{i}\left( x\right)
w_{j}\left( y\right).
\end{equation*}

The norm (or the quasinorm) of the space $L_{p}\left( G\times G\right) $ is
defined by
\begin{equation*}
\left\Vert f\right\Vert _{p}:=\left( \int_{G\times G}\left\vert f\left(
x,y\right) \right\vert ^{p}d\mu \left( x,y\right) \right)
^{1/p}\,\,\,\,\left( 0<p\leq \infty \right) .
\end{equation*}%
The space weak-$L_{p}\left( G\times G\right) $ consists of all measurable
functions $f$ for which
\begin{equation*}
\left\Vert f\right\Vert _{\text{weak}-L_{p}\left( G\times G\right)
}:=\sup\limits_{\lambda >0}\lambda \mu \left( \left\vert f\right\vert
>\lambda \right) ^{1/p}<+\infty .
\end{equation*}

The $\sigma $-algebra generated by the dyadic 2-dimensional $I_{k}\left(
x\right) \times I_{k}\left( y\right) $ cubes of measure $2^{-k}\times 2^{-k}$
will be denoted by $F_{k}\left( k\in \mathbb{N}\right) $. Denote by $%
f=\left( f^{\left( n\right) },n\in N\right) $ one-parameter martingales with
respect to $\left( F_{n},\;n\in N\right) $ (for details see e.g. \cite{We4}%
). The maximal function of a martingale $f$ is defined by
\begin{equation*}
f^{\ast }=\sup\limits_{n\in N}\left\vert f^{\left( n\right) }\right\vert.
\end{equation*}%
In case $f\in L_{1}\left( G\times G\right) $, the maximal function can also
be given by
\begin{equation*}
f^{\ast }\left( x,y\right) =\sup\limits_{n\geq 1}\frac{1}{\mu \left(
I_{n}(x)\times I_{n}(y)\right) }\left\vert \int_{I_{n}(x)\times
I_{n}(y)}f\left( s,t\right) d\mu \left( s,t\right) \right\vert ,\,\,
\end{equation*}%
\begin{equation*}
\left( x,y\right) \in G\times G.
\end{equation*}

For $0<p<\infty $ the martingale Hardy space $H_{p}(G\times G)$ consists of
all martingales for which
\begin{equation*}
\left\| f\right\| _{H_{p}}:=\left\| f^{*}\right\| _{p}<\infty.
\end{equation*}

If $f\!\in \!L_{1}\left( G\!\times \!G\right) $, then it is easy to show
that the sequence $\left( S_{2^{n},2^{n}}\left( f\right) :n\!\in \!\mathbf{N}%
\right) $ is a martingale. If $f$ is a martingale, that is $f=(f^{\left(
0\right) },f^{\left( 1\right) },\dots )$, then the Walsh--Fourier
coefficients must be defined in a little bit different way:
\begin{equation*}
\widehat{f}\left( i,j\right) =\lim\limits_{k\rightarrow \infty
}\int_{G\times G}f^{\left( k\right) }\left( x,y\right) w_{i}\left( x\right)
w_{j}\left( y\right) d\mu \left( x,y\right).
\end{equation*}%
The Walsh--Fourier coefficients of $f\in L_{1}\left( G\times G\right) $ are
the same as the ones of the martingale $\left( S_{2^{n},2^{n}}\left(
f\right) :n\in \mathbb{N}\right) $ obtained from $f$.

For $n\in \mathbb{P}$ and a martingale $f$ the Marcinkiewicz-Fejér means and
triangular Fejér means of order $n$ of the 2-dimensional Walsh--Fourier
series of a function $f$ is given by
\begin{equation*}
\sigma _{n}^{\Box }f(x,y)=\frac{1}{n}\sum\limits_{j=0}^{n-1}S_{j}^{\Box
}f(x,y)
\end{equation*}%
and
\begin{equation*}
\sigma _{n}^{\bigtriangleup }f(x,y)=\frac{1}{n}\sum%
\limits_{j=0}^{n-1}S_{j}^{\bigtriangleup }f(x,y),
\end{equation*}%
respectively. It is easy to show that
\begin{equation*}
\sigma _{n}^{\Box }f(x,y)=\int_{G\times G}f\left( s,t\right) K_{n}^{\Box
}\left( x+s,y+t\right) d\mu \left( s,t\right)
\end{equation*}%
and
\begin{equation}
\sigma _{n}^{\bigtriangleup }f(x,y)=\int_{G\times G}f\left( s,t\right)
K_{n}^{\bigtriangleup }\left( x+s,y+t\right) d\mu \left( s,t\right) ,
\label{triang}
\end{equation}%
where
\begin{equation*}
K_{n}^{\Box }\left( x,y\right) :=\frac{1}{n}\sum\limits_{j=0}^{n-1}D_{j}^{%
\Box }\left( x,y\right)
\end{equation*}%
and
\begin{equation*}
K_{n}^{\bigtriangleup }\left( x,y\right) :=\frac{1}{n}\sum%
\limits_{j=0}^{n-1}D_{j}^{\bigtriangleup }\left( x,y\right).
\end{equation*}%
We can write%
\begin{eqnarray}
K_{n}^{\bigtriangleup }\left( x,y\right) &=&\frac{1}{n}\sum%
\limits_{k=0}^{n-1}D_{k}^{\bigtriangleup }\left( x,y\right)
\label{tr-kernel} \\
&=&\frac{1}{n}\sum\limits_{k=1}^{n-1}\sum\limits_{i=0}^{k-1}\sum%
\limits_{j=0}^{k-i-1}w_{i}\left( x\right) w_{j}\left( y\right)  \notag \\
&=&\frac{1}{n}\sum\limits_{k=1}^{n-1}\sum\limits_{i=0}^{k-1}w_{i}\left(
x\right) D_{k-i}\left( y\right)  \notag \\
&=&\frac{1}{n}\sum\limits_{k=1}^{n-1}\sum\limits_{i=1}^{k}w_{k-i}\left(
x\right) D_{i}\left( y\right)  \notag \\
&=&\frac{1}{n}\sum\limits_{i=1}^{n-1}\sum\limits_{k=i}^{n-1}w_{k-i}\left(
x\right) D_{i}\left( y\right)  \notag \\
&=&\frac{1}{n}\sum\limits_{i=1}^{n-1}D_{n-i}\left( x\right) D_{i}\left(
y\right) .  \notag
\end{eqnarray}

A bounded measurable function $a$ is a $p$-atom if there exists a
dyadic\thinspace 2-dimen\-sional cube $I$ such that

a) $\int_{I}a d\mu =0$;

b) $\left\Vert a\right\Vert _{\infty }\leq \mu (I)^{-1/p}$;

c) supp $a\subset I$.

An operator $T$ which maps the set of martingale into the collection of
measurable functions will be called $p$-quasi-local if there exists a
constant $c_{p}>0$ such that for evey $p$-atom $a$

\begin{equation*}
\int\limits_{G\times G\backslash I}|Ta|^{p}\leq c_{p}<\infty ,
\end{equation*}%
where $I$ is the support of the atom.

\section{Formulation of main results}

\begin{theorem}
\label{main}If $4/5<p\leq \infty $, then the operators $\sigma
_{n}^{\bigtriangleup }$ are uniformly bounded from the Hardy space $%
H_{p}\left( G\times G\right) $ to the space $L_{p}\left( G\times G\right) .$%
In particular, if $f\in H_{p}\left( G\times G\right) $, then%
\begin{equation*}
\sup\limits_{n}\left\Vert \sigma _{n}^{\bigtriangleup }f\right\Vert _{p}\leq c_{p}\left\Vert
f\right\Vert _{H_{p}}.
\end{equation*}
\end{theorem}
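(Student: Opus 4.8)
The plan is to establish the theorem by the standard atomic-decomposition method for martingale Hardy spaces. Since the operators $\sigma_n^{\bigtriangleup}$ are sublinear (in fact linear up to the supremum structure) and we already know by interpolation that it suffices to treat the range $4/5 < p \leq 1$, the main reduction is to prove that the operators are uniformly $p$-quasi-local. That is, I would aim to show there is a constant $c_p$ independent of $n$ such that for every $p$-atom $a$ supported on a dyadic cube $I = I_N(x^0)\times I_N(y^0)$ of side $2^{-N}$,
\begin{equation*}
\int_{(G\times G)\setminus I}\bigl|\sigma_n^{\bigtriangleup}a\bigr|^p\,d\mu \leq c_p < \infty.
\end{equation*}
Once this uniform quasi-locality is in hand, the decomposition $f=\sum_k \mu_k a_k$ of an $H_p$ martingale into atoms, together with the $L_\infty$-boundedness of each $\sigma_n^{\bigtriangleup}$ (which controls the integral over $I$ itself via the atom's normalization), yields the stated uniform bound $\sup_n\|\sigma_n^{\bigtriangleup}f\|_p \leq c_p\|f\|_{H_p}$.

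Everything therefore rests on pointwise estimates for the triangular Fej\'er kernel $K_n^{\bigtriangleup}$, and the crucial structural input is the factorized formula derived in \eqref{tr-kernel}, namely
\begin{equation*}
K_n^{\bigtriangleup}(x,y)=\frac{1}{n}\sum_{i=1}^{n-1}D_{n-i}(x)\,D_i(y).
\end{equation*}
The strategy is to split the kernel according to the dyadic blocks of the summation index. First I would reduce to dyadic orders $n=2^A$ by a standard averaging argument (writing a general $n$ as a sum of the dyadic pieces $2^{A}$ in its binary expansion and controlling the error terms), so that it suffices to estimate $K_{2^A}^{\bigtriangleup}$. Then, using the two-part description of the Walsh--Dirichlet kernel in \eqref{dir} — the clean formula $D_{2^n}=2^n\mathbf{1}_{I_n}$ and the explicit expression for $D_m$ on the ring $I_i\setminus I_{i+1}$ — I would expand the product $D_{n-i}(x)D_i(y)$ and organize the double sum over the dyadic rings of $G\times G$. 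The goal is a bound of the form $|K_{2^A}^{\bigtriangleup}(x,y)|\le c\,2^{A}\,\Phi(x,y)$ where $\Phi$ encodes the distance of $(x,y)$ from the diagonal and from the origin in terms of the dyadic intervals containing $x$ and $y$.

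The main obstacle, and where the constraint $p>4/5$ will enter, is the diagonal behavior of this kernel. Because $K_n^{\bigtriangleup}$ couples the two variables through the constraint $i+(n-i)=n$, the kernel is not a tensor product and concentrates its mass along the anti-diagonal $x \sim y$; the off-diagonal decay is genuinely weaker than in the cubical (Marcinkiewicz) case, which is precisely why the critical exponent rises from $2/3$ to $4/5$. Concretely, after integrating $|K_{2^A}^{\bigtriangleup}|^p$ over the rings $(I_s\setminus I_{s+1})\times(I_t\setminus I_{t+1})$ lying outside the support $I$, the worst contribution comes from the cells near $s=t$ where the Dirichlet factors are simultaneously large; summing the geometric-type series in $s,t,A$ converges exactly when $p>4/5$ and diverges otherwise. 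I would carry out this summation carefully, keeping track of the factor $2^{N(1-1/p)}$ coming from the atom's $L_\infty$ normalization on a cube of measure $2^{-2N}$, and verify that the translation by the atom's center $(x^0,y^0)$ does not affect the kernel estimates since Walsh translation preserves the dyadic ring structure. Establishing the sharp per-ring bound for the diagonal cells is the technical heart of the argument; the remaining off-diagonal cells are handled by cruder estimates that sum comfortably for all $p>1/2$.
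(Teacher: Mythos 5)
Your high-level framework matches the paper's: a Weisz-type interpolation lemma reduces everything to $L_\infty$-boundedness plus uniform $p$-quasi-locality on atoms, and you correctly identify the kernel identity $K_n^{\bigtriangleup}(x,y)=\frac1n\sum_{i}D_{n-i}(x)D_i(y)$ and the fact that the anti-diagonal coupling is what pushes the critical index up to $4/5$. But there are two genuine gaps. First, the proposed reduction to dyadic orders $n=2^A$ by a ``standard averaging argument'' cannot be taken for granted and is in fact the crux of the problem: for the dyadic subsequence the maximal operator $\sup_n|\sigma_{2^n}^{\bigtriangleup}f|$ is already known (Goginava--Weisz) to be bounded for all $p>1/2$, so if general $n$ reduced to dyadic $n$ with uniformly summable coefficients the theorem would hold down to $1/2$, not $4/5$. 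The coupling $D_{n-i}(x)D_i(y)$ does not split along the binary expansion of $n$ the way the one-dimensional Fej\'er kernel does; the paper instead decomposes $n=2^Nq_1+q_2$ relative to the scale $N$ of the atom's support (Lemma \ref{lemma3}, via Fine's formula $D_{k+l2^N}=w_l(2^N\cdot)D_k+D_{2^N}D_l(2^N\cdot)$), which is a different and essential move and reduces matters to the two ``partial anti-diagonal'' sums $\sum_{k<q_2}D_k(x)D_{q_2-k}(y)$ and $\sum_{k>q_2}D_k(x)D_{k-q_2}(y)$, uniformly in $q_2$.

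Second, the estimate you describe for the diagonal cells --- bounding the kernel ring by ring with the pointwise sizes of the Dirichlet kernels and summing a geometric series --- does not by itself produce the $4/5$ threshold. The pointwise bound $|D_{m}(x)D_{m'}(y)|\le c2^{i+j}$ on $(I_i\setminus I_{i+1})\times(I_j\setminus I_{j+1})$ applied to all $\sim 2^s$ summands of a dyadic block loses too much when $p<1$; it yields roughly $2^{Np}$ on the critical region, which is compatible with the required $2^{N(3p-2)}$ only at $p=1$. The mechanism that actually closes the argument in the paper is Lemma \ref{lemma2}: a Cauchy--Schwarz inequality in the $x$-variable followed by an orthogonality count showing that for each $k$ only $O(2^i)$ indices $l$ make $\int_{J_i}w_{(\alpha_1(n,k+n^{(s+1)}))^{(i+1)}}w_{(\alpha_1(n,l+n^{(s+1)}))^{(i+1)}}\,d\mu$ nonzero, so the double sum over $(k,l)$ has $2^{s+i}$ rather than $2^{2s}$ effective terms; the condition $p>4/5$ then falls out of $\sum_i2^{i(5p/2-2)}$. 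Without this (or an equivalent square-function/orthogonality) input, and without the separate treatment of the mixed regions $\overline{I}_N\times I_N$ and $I_N\times\overline{I}_N$, where extra terms such as $D_{2^N}(y)\sum_kD_k(x)$ survive and the one-dimensional estimates of Lemmas \ref{sup1}--\ref{sup3} are needed, the proof does not close. (A minor slip: the factor contributed by the atom's normalization over its support is $2^{2N/p}\cdot2^{-2N}=2^{2N(1/p-1)}$, not $2^{N(1-1/p)}$.)
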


\section{Auxiliary proposition}

We shall need the following lemma (see \cite{We4}).

\begin{lemma}
\label{weisz}Suppose that the operator $T$ \thinspace is\thinspace $\sigma $%
-sublinear and $p$-quasi-local for each $0<p_{0}<p\leq 1$. If $T$ is bounded
from $L_{\infty }(G\times G)$ to $L_{\infty }(G\times G)$, then

\begin{equation*}
\left\Vert Tf\right\Vert _{p}\leq c\left( p\right) \left\Vert f\right\Vert
_{H_{p}}\,\,\,\,\,\,\,\,\,\,\,(f\in H_{p}\left( G\times G\right) )
\end{equation*}%
for every $0<p_{0}<p<\infty .\,$
\end{lemma}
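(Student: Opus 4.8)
The plan is to reduce everything to the action of $T$ on a single $p$-atom by means of the atomic decomposition of $H_p$, to settle the range $0<p\le 1$ directly, and then to reach $1<p<\infty$ by interpolation. First I would invoke the atomic decomposition of the martingale Hardy space: for $0<p\le 1$ every $f\in H_p(G\times G)$ admits a representation $f=\sum_k\mu_k a^k$ (in the martingale sense), where each $a^k$ is a $p$-atom supported on a dyadic cube $I^k$ and $\bigl(\sum_k\mu_k^p\bigr)^{1/p}\le c_p\|f\|_{H_p}$. Since $T$ is $\sigma$-sublinear, this gives the pointwise bound $|Tf|\le\sum_k\mu_k|Ta^k|$, so it suffices to control each $Ta^k$ uniformly.

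Next, fix $p$ with $p_0<p\le 1$. Using the $p$-subadditivity of $t\mapsto t^p$ on $[0,\infty)$, the pointwise bound yields $|Tf|^p\le\sum_k\mu_k^p|Ta^k|^p$. Integrating over $G\times G$ and splitting the integral for each atom into the part over its support $I^k$ and the part over the complement, I obtain
\begin{equation*}
\|Tf\|_p^p\le\sum_k\mu_k^p\left(\int_{I^k}|Ta^k|^p\,d\mu+\int_{(G\times G)\setminus I^k}|Ta^k|^p\,d\mu\right).
\end{equation*}
The complementary integral is bounded by the constant $c_p$ coming straight from $p$-quasi-locality. For the integral over $I^k$ I would use the assumed $L_\infty\to L_\infty$ boundedness together with property (b) of a $p$-atom, $\|a^k\|_\infty\le\mu(I^k)^{-1/p}$: then $\|Ta^k\|_\infty\le C\mu(I^k)^{-1/p}$, whence $\int_{I^k}|Ta^k|^p\,d\mu\le\|Ta^k\|_\infty^p\,\mu(I^k)\le C^p$. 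Thus $\int_{G\times G}|Ta^k|^p\,d\mu$ is bounded by a constant independent of $k$, and summing gives $\|Tf\|_p^p\le c\sum_k\mu_k^p\le c'\|f\|_{H_p}^p$, that is, $\|Tf\|_p\le c(p)\|f\|_{H_p}$ for every $p_0<p\le 1$.

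Finally, for $1<p<\infty$ I would argue by interpolation. Having established the boundedness $T:H_{p_1}\to L_{p_1}$ for some fixed $p_1\in(p_0,1]$ and the boundedness $T:L_\infty\to L_\infty$ (equivalently $T:H_\infty\to L_\infty$), and since $T$ is sublinear, the interpolation theorem for martingale Hardy spaces yields $T:H_p\to L_p$ for all $p_1<p<\infty$; recalling that $H_p=L_p$ with equivalent norms in this range, this completes the range $p_0<p<\infty$.

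The step I expect to be the crux is the atomic estimate for $p\le 1$: the point is that the two contributions---the local one, handled by the $L_\infty$ bound together with the size normalization of the atom, and the global one, handled by quasi-locality---are each bounded by a constant \emph{independent of the individual atom}, so that the coefficient inequality $\sum_k\mu_k^p\le c\|f\|_{H_p}^p$ can be applied term by term without losing control of the sum. The passage to $p>1$ is then a routine interpolation, legitimate precisely because $T$ is $\sigma$-sublinear.
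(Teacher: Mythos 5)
Your proof is correct, and it is essentially the canonical argument: the paper itself gives no proof of this lemma, citing instead Weisz's monograph \cite{We4}, where the proof proceeds exactly as you do --- atomic decomposition of $H_p$ for $p\le 1$, term-by-term estimation via $\sigma$-sublinearity with the local part controlled by the $L_\infty$-bound and atom normalization and the off-support part by $p$-quasi-locality, followed by interpolation with the $L_\infty\to L_\infty$ endpoint (and $H_p=L_p$ for $p>1$) to cover $1<p<\infty$. No gaps worth flagging; your identification of the atom-uniform constant as the crux matches the structure of the cited proof.
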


\begin{lemma}
\label{sup1}Let $p\in (1/2,1].$ Then%
\begin{equation*}
\int\limits_{G}\left( \sup\limits_{1\leq n\leq 2^{N}}\left\vert
\sum\limits_{j=1}^{n}D_{j}\left( x\right) \right\vert \right) ^{p}d\mu
\left( x\right) \leq c_{p}2^{N\left( 2p-1\right) }.
\end{equation*}
\end{lemma}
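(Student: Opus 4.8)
The plan is to reduce the statement to a pointwise estimate for the (unnormalized) one--dimensional Walsh--Fej\'er kernel. Since $D_0=0$, we have $\sum_{j=1}^{n}D_j(x)=\sum_{j=0}^{n}D_j(x)=:F_{n+1}(x)$, so it suffices to bound $\int_G\big(\sup_{2\le m\le 2^{N}+1}|F_m(x)|\big)^p\,d\mu(x)$, where $F_m(x):=\sum_{j=0}^{m-1}D_j(x)$. First I would record the elementary recursion obtained by writing $m=2^{A}+m'$ with $0\le m'<2^{A}=2^{|m|}$ and using $D_{2^{A}+j'}=D_{2^{A}}+w_{2^{A}}D_{j'}$ for $0\le j'<2^{A}$:
\begin{equation*}
F_m(x)=F_{2^{A}}(x)+m'\,D_{2^{A}}(x)+w_{2^{A}}(x)\,F_{m'}(x).
\end{equation*}
Together with (\ref{dir}) this determines $F_{2^{A}}$: iterating the case $m'=0$, and using $w_{2^{A}}(x)=r_A(x)=(-1)^{x_A}$ together with $D_{2^{A}}(x)=2^{A}\mathbf 1_{I_A}(x)$, one obtains $F_{2^{A}}(x)=2^{A-1}(2^{A}-1)$ whenever $x\in I_A$, while for $x\notin I_A$ the factor $1+r_A$ forces a product structure.

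Second, fix $x$, let $s$ be the index with $x\in I_s\setminus I_{s+1}$ (so $x_0=\dots=x_{s-1}=0$, $x_s=1$), and let $u>s$ be the position of the next nonzero coordinate. From the recursion and the explicit form of $F_{2^{A}}$ one checks that $D_{2^{A}}(x)=0$ for $A\ge s+1$, that $|F_{2^{A}}(x)|\le c\,2^{2s}$ for $A\le s$, and that for $A\ge s+1$
\begin{equation*}
F_{2^{A}}(x)=2^{2s}\prod_{b=s+1}^{A-1}\big(1+r_b(x)\big),
\end{equation*}
which equals $2^{A+s-1}$ when $A\le u$ and vanishes when $A>u$. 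Unrolling the recursion along the binary digits of $m$ (the cross term $m'D_{2^{A}}$ survives only at scales $A\le s$, where it is $\le 2^{2s}$) then yields the pointwise bound
\begin{equation*}
\sup_{2\le m\le 2^{N}+1}|F_m(x)|\le c\,2^{\min(u,N)+s}\qquad(x\in I_s\setminus I_{s+1}).
\end{equation*}

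Third, I would integrate. Partition $I_s\setminus I_{s+1}$ according to $u$: the set where the next nonzero coordinate sits at position $u$ has measure $2^{-(u+1)}$, and the set where $x_{s+1}=\dots=x_{N-1}=0$ (so $\min(u,N)=N$ and the bound is $2^{N+s}$) has measure $2^{-N}$. Hence
\begin{align*}
\int_{I_s\setminus I_{s+1}}\Big(\sup_{m}|F_m|\Big)^p\,d\mu
&\le c\sum_{u=s+1}^{N-1}2^{-(u+1)}2^{p(u+s)}+c\,2^{-N}2^{p(N+s)}.
\end{align*}
Because $p\le 1$ the $u$--sum is a geometric series dominated by its first term and is $\le c_p\,2^{s(2p-1)}$, while the last term equals $c\,2^{N(p-1)}2^{sp}$. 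Summing over $0\le s\le N-1$ and adding the contribution of $I_N$, on which $|F_{2^{N}}(x)|\asymp 2^{2N-1}$ so that $\int_{I_N}(\sup_m|F_m|)^p\,d\mu\le c\,2^{-N}2^{2Np}=c\,2^{N(2p-1)}$, and using $p>1/2$ (which makes the sum $\sum_s 2^{s(2p-1)}$ dominated by $s=N$), each of the three pieces contributes at most $c_p\,2^{N(2p-1)}$, which proves the lemma.

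I expect the main obstacle to be the second step: establishing the sharp pointwise estimate for the maximal unnormalized Fej\'er kernel at \emph{non-dyadic} $m$, that is, controlling the accumulation of the terms $F_{2^{A}}(x)$ and the cross terms $m'D_{2^{A}}(x)$ across all binary scales of $m$ simultaneously and uniformly in $m\le 2^{N}$. Once the two--parameter (in $s$ and $u$) pointwise bound is in hand, the final integration and summation are routine geometric--series estimates, the hypotheses $p>1/2$ and $p\le 1$ being exactly what guarantee that every piece collapses to the claimed order $2^{N(2p-1)}$.
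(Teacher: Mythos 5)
The paper does not actually prove this lemma: it only cites \cite{GogAM} for the proof, so there is no in-paper argument to measure yours against. Your self-contained argument is essentially correct and follows the natural route. The recursion $F_m=F_{2^{A}}+m'D_{2^{A}}+w_{2^{A}}F_{m'}$, the value $F_{2^{A}}=2^{A-1}(2^{A}-1)$ on $I_A$, the product formula $F_{2^{A}}(x)=2^{2s}\prod_{b=s+1}^{A-1}(1+r_b(x))$ on $J_s=I_s\setminus I_{s+1}$, and the vanishing of $D_{2^{A}}(x)$ for $A\ge s+1$ all check out; since the unrolled recursion has unimodular Walsh coefficients, the triangle inequality over the binary digits of $m$ is legitimate, the low scales contribute only $\sum_{A\le s}c2^{2A}\le c2^{2s}\le c2^{\min(u,N)+s}$, and the two-parameter pointwise bound $\sup_{m}|F_m(x)|\le c2^{\min(u,N)+s}$ together with the partition of $J_s$ by the position $u$ of the next nonzero coordinate (measures $2^{-(u+1)}$ and $2^{-N}$) gives the stated integral estimate.

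The one genuine flaw is at the endpoint $p=1$, which the lemma includes: the $u$-sum $\sum_{u=s+1}^{N-1}2^{-(u+1)}2^{p(u+s)}$ is geometric with ratio $2^{p-1}$, which equals $1$ when $p=1$, so it is \emph{not} dominated by its first term there (it equals $(N-s-1)2^{s-1}$), and the constant you extract blows up as $p\uparrow 1$. The repair is one line: for all $1/2<p\le 1$ one has $\sum_{u=s+1}^{N-1}2^{-(u+1)}2^{p(u+s)}\le (N-s)\,2^{s(2p-1)}$, and $\sum_{s=0}^{N-1}(N-s)2^{s(2p-1)}\le c_p2^{N(2p-1)}$ still holds because $2p-1>0$ (substitute $k=N-s$ and use $\sum_{k\ge 1}k\,2^{-k(2p-1)}<\infty$). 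With that adjustment the proof is complete, and the hypotheses $p>1/2$ and $p\le 1$ enter exactly where you say they do.
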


The proof can be found in \cite{GogAM}.

\begin{lemma}
\label{lemma1}Let $p\in (1/2,1].$ Then%
\begin{equation*}
\int\limits_{G}\sup\limits_{1\leq n\leq 2^{N}}\left\vert
\sum\limits_{k=1}^{n}D_{k}\left( x\right) \left( n-k+1\right) \right\vert
^{p}d\mu \left( x\right) \leq c_{p}2^{N\left( 3p-1\right) }.
\end{equation*}
\end{lemma}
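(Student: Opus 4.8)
The plan is to reduce the claim to Lemma~\ref{sup1} by means of an exact summation identity that converts the weighted sum $\sum_{k=1}^{n}D_{k}(x)(n-k+1)$ into an unweighted iterated sum of the partial sums already controlled in Lemma~\ref{sup1}. Write $T_{m}(x):=\sum_{k=1}^{m}D_{k}(x)$, so that Lemma~\ref{sup1} controls precisely $\int_{G}\bigl(\sup_{1\leq m\leq 2^{N}}|T_{m}|\bigr)^{p}\,d\mu$.

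First I would establish, for every $1\leq n\leq 2^{N}$, the identity
\begin{equation*}
\sum_{k=1}^{n}D_{k}(x)(n-k+1)=\sum_{m=1}^{n}T_{m}(x).
\end{equation*}
This follows by rewriting the weight as $n-k+1=\#\{m:\,k\leq m\leq n\}=\sum_{m=k}^{n}1$ and interchanging the order of summation:
\begin{equation*}
\sum_{k=1}^{n}D_{k}(x)\sum_{m=k}^{n}1=\sum_{m=1}^{n}\sum_{k=1}^{m}D_{k}(x)=\sum_{m=1}^{n}T_{m}(x).
\end{equation*}

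With this identity in hand, the triangle inequality gives, for every $n\leq 2^{N}$,
\begin{equation*}
\Bigl|\sum_{k=1}^{n}D_{k}(x)(n-k+1)\Bigr|\leq\sum_{m=1}^{n}|T_{m}(x)|\leq n\sup_{1\leq m\leq 2^{N}}|T_{m}(x)|\leq 2^{N}\sup_{1\leq m\leq 2^{N}}|T_{m}(x)|,
\end{equation*}
and taking the supremum over $1\leq n\leq 2^{N}$ preserves this bound. Raising to the $p$-th power, integrating, and invoking Lemma~\ref{sup1} yields
\begin{equation*}
\int_{G}\sup_{1\leq n\leq 2^{N}}\Bigl|\sum_{k=1}^{n}D_{k}(x)(n-k+1)\Bigr|^{p}d\mu(x)\leq 2^{Np}\int_{G}\Bigl(\sup_{1\leq m\leq 2^{N}}|T_{m}|\Bigr)^{p}d\mu\leq c_{p}2^{Np}2^{N(2p-1)}=c_{p}2^{N(3p-1)},
\end{equation*}
which is exactly the asserted estimate.

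The computation is short, so there is no serious analytic obstacle; the only point requiring care is recognizing that the crude bound $|\sum_{m=1}^{n}T_{m}|\leq n\sup_{m}|T_{m}|$ together with $n\leq 2^{N}$ already produces the correct exponent, the extra factor $2^{Np}$ combining with the gain $2^{N(2p-1)}$ from Lemma~\ref{sup1} to give precisely $2^{N(3p-1)}$. Thus no sharper (e.g.\ Abel-summation) estimate of the partial sums $T_{m}$ is needed, and the entire difficulty is transferred to the already-established Lemma~\ref{sup1}.
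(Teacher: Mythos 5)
Your proof is correct and takes essentially the same route as the paper: your identity $\sum_{k=1}^{n}D_{k}(x)(n-k+1)=\sum_{m=1}^{n}T_{m}(x)$ is precisely the Abel transformation the authors invoke, and both arguments then reduce to Lemma~\ref{sup1} via the crude bound $n\leq 2^{N}$, giving the factor $2^{Np}\cdot 2^{N(2p-1)}=2^{N(3p-1)}$.
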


\begin{proof}[Proof of Lemma \protect\ref{lemma1}]
Applying Lemma \ref{sup1} and Abel's transformation we obtain%
\begin{equation*}
\int\limits_{G}\sup\limits_{1\leq n\leq 2^{N}}\left\vert
\sum\limits_{k=1}^{n}D_{k}\left( x\right) \left( n-k+1\right) \right\vert
^{p}d\mu \left( x\right)
\end{equation*}%
\begin{equation*}
\leq c_{p}2^{Np}\int\limits_{G}\sup\limits_{1\leq n\leq 2^{N}}\left\vert
\sum\limits_{k=1}^{n}D_{k}\left( x\right) \right\vert ^{p}d\mu \left(
x\right)
\end{equation*}%
\begin{equation*}
+c_{p}\int\limits_{G}\left( \sup\limits_{1\leq n\leq
2^{N}}\sum\limits_{k=1}^{n}\left\vert \sum\limits_{l=1}^{k}D_{k}\left(
x\right) \right\vert \right) ^{p}d\mu \left( x\right)
\end{equation*}%
\begin{equation*}
\leq c_{p}2^{Np}\int\limits_{G}\sup\limits_{1\leq n\leq 2^{N}}\left\vert
\sum\limits_{k=1}^{n}D_{k}\left( x\right) \right\vert ^{p}d\mu \left(
x\right) \leq c_{p}2^{N\left( 3p-1\right) }.
\end{equation*}

Lemma \ref{lemma1} is proved.
\end{proof}

\begin{lemma}
\label{sup3} Let $p\in (1/2,1].$ Then%
\begin{equation*}
\int\limits_{G}\left( \sup\limits_{0\leq q<2^{N}}\left\vert
\sum\limits_{k=q}^{2^{N}-1}D_{k}\left( x\right) \left( k-q+1\right)
\right\vert \right) ^{p}d\mu \left( x\right) \leq c_{p}2^{N\left(
3p-1\right) }.
\end{equation*}
\end{lemma}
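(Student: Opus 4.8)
The plan is to reduce the new sum $\sum_{k=q}^{2^N-1} D_k(x)(k-q+1)$ to the quantity controlled by Lemma \ref{lemma1} by a change of the summation variable and an Abel-type decomposition. First I would observe that the weight $(k-q+1)$ counts distance from the lower endpoint $q$, whereas Lemma \ref{lemma1} handles weights $(n-k+1)$ counting distance from the upper endpoint $n$. So I would first rewrite the inner sum in terms of a new index. Setting $M:=2^N-1$ and splitting the linear weight as $(k-q+1)=(M-q+1)-(M-k)$, I would write
\begin{equation*}
\sum_{k=q}^{2^N-1}D_k(x)(k-q+1)=(M-q+1)\sum_{k=q}^{M}D_k(x)-\sum_{k=q}^{M}D_k(x)(M-k).
\end{equation*}
This separates the problematic $q$-dependence into a scalar multiple of a plain partial sum of Dirichlet kernels plus a term whose weight $(M-k)$ now decreases toward the fixed upper endpoint $M=2^N-1$.

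For the first term, the coefficient $(M-q+1)\le 2^N$ is a harmless scalar, and the remaining factor $\sum_{k=q}^{M}D_k(x)=\sum_{k=1}^{M}D_k(x)-\sum_{k=1}^{q-1}D_k(x)$ is a difference of two partial sums, each of which is dominated by $\sup_{1\le n\le 2^N}\bigl|\sum_{k=1}^{n}D_k(x)\bigr|$. Taking the supremum over $q$ and applying Lemma \ref{sup1} (together with the elementary inequality $(a+b)^p\le a^p+b^p$ for $0<p\le 1$), this contribution is bounded by $c_p\,2^{Np}\cdot 2^{N(2p-1)}=c_p\,2^{N(3p-1)}$, which is exactly the target order. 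For the second term, the weight $(M-k)$ runs the same way as the weight in Lemma \ref{lemma1}, so after reindexing by $k\mapsto M-k$ (or equivalently recognizing it as a partial sum of the expression already estimated in Lemma \ref{lemma1} with upper limit $2^N$), the same $c_p\,2^{N(3p-1)}$ bound applies.

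The main obstacle I anticipate is bookkeeping the endpoint and the dependence on the free parameter $q$ uniformly: the supremum is over $0\le q<2^N$, so every estimate must be independent of $q$, and the Abel summation must be arranged so that the $q$-dependence appears only through bounded scalars and through partial sums already controlled by the suprema in Lemmas \ref{sup1} and \ref{lemma1}. Provided the splitting above is carried out carefully and the subadditivity of $t\mapsto t^p$ is used to pass the power inside the finite sum of two terms, no new kernel estimate is needed — the whole statement follows from the two preceding lemmas. Hence I expect the proof to consist essentially of the displayed algebraic identity, the pointwise domination by the two suprema, and a single invocation of each of Lemma \ref{sup1} and Lemma \ref{lemma1}.
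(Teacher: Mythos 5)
Your strategy---split the linear weight $(k-q+1)$ by an Abel-type identity so that the $q$-dependence lands either on a scalar factor of size at most $2^{N}$ multiplying a plain partial sum of Dirichlet kernels, or on a weighted sum of the type already estimated, and then invoke Lemmas \ref{sup1} and \ref{lemma1}---is exactly the paper's strategy; the paper's own (very terse) proof writes $k-q+1=(k+1)-q$ and reduces the integrand to $c_{p}2^{Np}\sup_{q}\bigl\vert \sum_{k=0}^{q-1}D_{k}\bigr\vert^{p}+c_{p}\sup_{q}\bigl\vert \sum_{k=0}^{q-1}(k+1)D_{k}\bigr\vert^{p}$, which is the mirror image of your splitting. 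The first half of your argument is fine. The one step that does not work as written is the treatment of $\sum_{k=q}^{M}D_{k}(x)(M-k)$ with $M=2^{N}-1$: the substitution $k\mapsto M-k$ produces $\sum_{j}j\,D_{M-j}(x)$, which is not of the form appearing in Lemma \ref{lemma1}, and viewing the sum as a tail of $\sum_{k=1}^{M}D_{k}(x)(M-k+1)$ leaves you with the head $\sum_{k=1}^{q-1}D_{k}(x)(M-k+1)$, whose weight references $M$ rather than the upper summation limit $q-1$, so Lemma \ref{lemma1} does not apply to it directly either. The repair is one more application of the same trick: write $M-k+1=(M-q+1)+(q-k)$, so the head becomes a scalar at most $2^{N}$ times $\sum_{k=1}^{q-1}D_{k}$ (handled by Lemma \ref{sup1}) plus $\sum_{k=1}^{q-1}(q-k)D_{k}$, which is precisely the expression of Lemma \ref{lemma1} with $n=q-1$. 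Alternatively, the Abel identity $\sum_{k=q}^{M}(M-k)D_{k}=\sum_{l=q}^{M-1}\sum_{k=q}^{l}D_{k}$ bounds the whole second term pointwise by $2^{N+1}\sup_{1\leq n\leq 2^{N}}\bigl\vert \sum_{k=1}^{n}D_{k}\bigr\vert$, so Lemma \ref{sup1} alone gives $c_{p}2^{Np}2^{N(2p-1)}=c_{p}2^{N(3p-1)}$. With that one line added your proof closes and is essentially the paper's.
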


\begin{proof}[Proof of Lemma \protect\ref{sup3}]
From Lemma \ref{sup1} we have%
\begin{equation*}
\int\limits_{G}\left( \sup\limits_{0\leq q<2^{N}}\left\vert
\sum\limits_{k=q}^{2^{N}-1}D_{k}\left( x\right) \left( k-q+1\right)
\right\vert \right) ^{p}d\mu \left( x\right)
\end{equation*}%
\begin{equation*}
\leq c_{p}2^{Np}\int\limits_{G}\sup\limits_{0\leq q<2^{N}}\left( \left\vert
\sum\limits_{k=0}^{q-1}D_{k}\left( x\right) \right\vert \right) ^{p}d\mu
\left( x\right)
\end{equation*}%
\begin{equation*}
+c_{p}\int\limits_{G}\sup\limits_{0\leq q<2^{N}}\left( \left\vert
\sum\limits_{k=0}^{q-1}\left( k+1\right) D_{k}\left( x\right) \right\vert
\right) ^{p}d\mu \left( x\right) \leq c_{p}2^{N\left( 3p-1\right) }.
\end{equation*}

Lemma \ref{sup3} is proved.
\end{proof}

Let $\alpha :=\left( \alpha _{1},\alpha _{2}\right) :\mathbb{N}%
^{2}\rightarrow \mathbb{N}^{2}$ be a function. Define the following
Marcinkiewicz-like kernels
\begin{equation*}
\frac{1}{n}\sum\limits_{k=0}^{n-1}D_{\alpha _{1}\left( n,k\right) }\left(
x\right) D_{\alpha _{2}\left( n,k\right) }\left( y\right) ,n\in \mathbb{P}.
\end{equation*}

Denote by $\#B$ the cardinality of set $B$. Suppose that
\begin{equation}
\#\left\{ l\in \mathbb{N}:\alpha _{j}\left( n,l\right) =\alpha _{j}\left(
n,k\right) ,l<n\right\} \leq C~\ \left( k<n,n\in \mathbb{P},j=1,2\right).
\label{alpa1}
\end{equation}

\begin{lemma}
\label{lemma2}Let $p\in (4/5,1]$. Then%
\begin{equation*}
\sup\limits_{1\leq n\leq 2^{N}}\int\limits_{\overline{I}_{N}\times \overline{%
I}_{N}}\left\vert \sum\limits_{k=0}^{n-1}D_{\alpha _{1}\left( n,k\right)
}\left( x\right) D_{\alpha _{2}\left( n,k\right) }\left( y\right)
\right\vert ^{p}d\mu \left( x,y\right) \leq c_{p}2^{N\left( 3p-2\right) }.
\end{equation*}
\end{lemma}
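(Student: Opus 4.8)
The plan is to extract cancellation from the Walsh--Dirichlet kernels via an $L_2$ estimate on each dyadic rectangle, then descend to $L_p$ by H\"older's inequality and sum a geometric series; the threshold $p>4/5$ will appear precisely as the condition that makes the final sum dominated by its top term. Throughout, write $F(x,y):=\sum_{k=0}^{n-1}D_{\alpha _1(n,k)}(x)D_{\alpha _2(n,k)}(y)$ and note that all bounds only use $n\le 2^{N}$, so uniformity in $n$ is automatic.

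First I would decompose the domain. Since $\overline{I}_N=\bigsqcup_{i=0}^{N-1}(I_i\setminus I_{i+1})$, we get $\overline{I}_N\times\overline{I}_N=\bigsqcup_{0\le i,j\le N-1}E_{ij}$ with $E_{ij}:=(I_i\setminus I_{i+1})\times(I_j\setminus I_{j+1})$ and $\mu(E_{ij})=2^{-i-j-2}$. On $I_i\setminus I_{i+1}$, formula (\ref{dir}) gives $D_m(x)=-w_{m^{(i+1)}}(x)\psi_i(m)$, where $\psi_i(m)=\sum_{r=0}^{i-1}m_r2^r-m_i2^i$ satisfies $|\psi_i(m)|\le 2^{i+1}$ and $w_{m^{(i+1)}}$ is a Walsh function whose index is a multiple of $2^{i+1}$. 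Hence on $E_{ij}$ the kernel $F$ is a sum over $k$ of products of two such Walsh functions weighted by the bounded coefficients $\psi_i(\alpha_1(n,k))\,\psi_j(\alpha_2(n,k))$.

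The heart of the argument is the $L_2$ bound on $E_{ij}$. Expanding $\int_{E_{ij}}|F|^2$ as a double sum over $k,k'$ and using $\int_{I_i\setminus I_{i+1}}w_aw_b=2^{-(i+1)}$ when the (multiple-of-$2^{i+1}$) indices $a,b$ coincide and $0$ otherwise, only those pairs survive for which $\alpha_1(n,k)$ and $\alpha_1(n,k')$ lie in one common dyadic block of length $2^{i+1}$ and, simultaneously, $\alpha_2(n,k)$ and $\alpha_2(n,k')$ lie in one common block of length $2^{j+1}$. This is where hypothesis (\ref{alpa1}) is decisive: a block of length $2^{i+1}$ contains $2^{i+1}$ integers and each value is attained at most $C$ times, so for fixed $k$ there are at most $C2^{i+1}$ admissible $k'$ from the first constraint and at most $C2^{j+1}$ from the second, whence at most $C2^{\min(i,j)+1}$ jointly. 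Summing over $k\le n\le 2^N$ and inserting $|\psi_i|\le 2^{i+1}$, $|\psi_j|\le 2^{j+1}$ yields $\int_{E_{ij}}|F|^2\lesssim 2^{i+j}2^{\min(i,j)}2^N$.

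Finally I would convert to $L_p$. H\"older's inequality with exponent $2/p$ gives $\int_{E_{ij}}|F|^p\le \mu(E_{ij})^{1-p/2}\bigl(\int_{E_{ij}}|F|^2\bigr)^{p/2}\lesssim 2^{(i+j)(p-1)}2^{\min(i,j)p/2}2^{Np/2}$. Summing over $i,j$: for fixed $i\le j$ the sum over $j$ is a convergent geometric series (as $p-1<0$) dominated by its $j=i$ term, leaving $\sum_i 2^{i(5p/2-2)}$; since $p>4/5$ forces $5p/2-2>0$, this increasing geometric sum is controlled by its top term $2^{N(5p/2-2)}$. Altogether $\int_{\overline{I}_N\times\overline{I}_N}|F|^p\lesssim 2^{Np/2}2^{N(5p/2-2)}=2^{N(3p-2)}$, uniformly in $n\le 2^N$, as claimed. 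I expect the main obstacle to be the combinatorial counting of surviving pairs, namely turning the exact-multiplicity hypothesis (\ref{alpa1}) into the bound on pairs whose indices merely share high-order bits; it is exactly this step that produces the critical exponent $4/5$.
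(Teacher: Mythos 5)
Your proof is correct and rests on the same engine as the paper's -- the decomposition of $\overline{I}_N\times\overline{I}_N$ into the rectangles $J_i\times J_j$ with $J_i=I_i\setminus I_{i+1}$, the representation $D_m=-w_{m^{(i+1)}}\psi_i(m)$ from (\ref{dir}), an $L_2$ expansion in which hypothesis (\ref{alpa1}) controls the number of surviving pairs, and H\"older to descend to $L_p$ -- but your execution is genuinely leaner in two respects. First, the paper begins by splitting $\{0,\dots,n-1\}$ into the dyadic blocks $[n^{(s+1)},\,n^{(s+1)}+n_s2^s)$ determined by the binary expansion of $n$ and then treats the ranges $s<i$ and $s\ge i$ separately (the terms $A_1$ and $A_2$), using a trivial sup-bound for the former and orthogonality only for the latter; you dispense with the $s$-decomposition altogether. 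Second, the paper applies Cauchy--Schwarz and orthogonality only in the $x$-variable and estimates the $y$-kernels crudely by $|D_m(y)|\le c2^j$ on $J_j$, whereas you exploit orthogonality in both variables at once; this is what produces the joint pair count $C2^{\min(i,j)}$ and makes the $A_1$/$A_2$ case distinction unnecessary, while the resulting per-rectangle bound $2^{Np/2}2^{i(3p/2-1)}2^{j(p-1)}$ (for $j\ge i$) coincides exactly with the paper's. The one place you should patch is the endpoint $p=1$: there $p-1=0$, so your inner sum over $j$ is not a convergent geometric series but contributes a factor $N-i+1$. This is harmless, since $\sum_i(N-i+1)2^{i(5p/2-2)}=\sum_i(N-i+1)2^{i/2}\le c\,2^{N/2}$ still gives $2^{N/2}\cdot 2^{N/2}=2^{N(3p-2)}$ -- precisely the separate $p=1$ computation the paper carries out in (\ref{A1}) and (\ref{A2_2}) -- but as written your justification does not cover that case.
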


\begin{proof}[Proof of Lemma \protect\ref{lemma2}]
\ In the sequel we use termonology and methods of paper \cite{GatMar}. Set%
\begin{equation*}
n^{\left( s\right) }:=\sum\limits_{k=s}^{\infty }n_{k}2^{s},
J_{k}:=I_{k}\backslash I_{k+1}. \mbox{\ Thus \ } n^{\left(0\right)}=n,
n^{\left(N\right) }=0, n,k\in \mathbb{N}.
\end{equation*}%
Then by
\begin{equation*}
\sum\limits_{k=0}^{n-1}D_{\alpha _{1}\left( n,k\right) }\left( x\right)
D_{\alpha _{2}\left( n,k\right) }\left( y\right)  = \sum\limits_{s=0}^{N-1}
n_s \sum\limits_{k=0}^{2^s-1}
D_{\alpha _{1} (n,k+n^{( s+1) } )}(x)D_{\alpha _{1} (n,k+n^{( s+1) } )}(y)
\end{equation*}%

and $\overline{I}_N=\cup_{i=0}^{N-1}J_i = \cup_{i=0}^{N-1}\left(I_i \setminus I_{i+1}\right) $
we have%
\begin{equation}
\int\limits_{\overline{I}_{N}\times \overline{I}_{N}}\left\vert
\sum\limits_{k=0}^{n-1}D_{\alpha _{1}\left( n,k\right) }\left( x\right)
D_{\alpha _{2}\left( n,k\right) }\left( y\right) \right\vert ^{p}d\mu \left(
x,y\right)  \label{A+B}
\end{equation}%
\begin{equation*}
\leq \sum\limits_{s=0}^{N-1}\int\limits_{\overline{I}_{N}\times \overline{I}%
_{N}}\left\vert \sum\limits_{k=0}^{2^{s}-1}D_{\alpha _{1}\left(
n,k+n^{\left( s+1\right) }\right) }\right.
\end{equation*}%
\begin{equation*}
\left. \times D_{\alpha _{2}\left( n,k+n^{\left( s+1\right) }\right) }\left(
y\right) \right\vert ^{p}d\mu \left( x,y\right)
\end{equation*}%
\begin{equation*}
=\sum\limits_{i=0}^{N-1}\sum\limits_{j=i}^{N-1}\sum\limits_{s=0}^{N-1}\int%
\limits_{J_{i}\times J_{j}}\left\vert \sum\limits_{k=0}^{2^{s}-1}D_{\alpha
_{1}\left( n,k+n^{\left( s+1\right) }\right) }\left( x\right) \right.
\end{equation*}%
\begin{equation*}
\left. \times D_{\alpha _{2}\left( n,k+n^{\left( s+1\right) }\right) }\left(
y\right) \right\vert ^{p}d\mu \left( x,y\right)
\end{equation*}%
\begin{equation*}
+\sum\limits_{i=0}^{N-1}\sum\limits_{j=0}^{i-1}\sum\limits_{s=0}^{N-1}\int%
\limits_{J_{i}\times J_{j}}\left\vert \sum\limits_{k=0}^{2^{s}-1}D_{\alpha
_{1}\left( n,k+n^{\left( s+1\right) }\right) }\left( x\right) \right.
\end{equation*}%
\begin{equation*}
\left. \times D_{\alpha _{2}\left( n,k+n^{\left( s+1\right) }\right) }\left(
y\right) \right\vert ^{p}d\mu \left( x,y\right) =:A+B,
\end{equation*}
where $n\leq 2^{N}$. We can write%
\begin{equation}
A=\sum\limits_{i=0}^{N-1}\sum\limits_{j=i}^{N-1}\sum\limits_{s=0}^{i-1}\int%
\limits_{J_{i}\times J_{j}}\left\vert \sum\limits_{k=0}^{2^{s}-1}D_{\alpha
_{1}\left( n,k+n^{\left( s+1\right) }\right) }\left( x\right) \right.
\label{A1+A2}
\end{equation}%
\begin{equation*}
\left. \times D_{\alpha _{2}\left( n,k+n^{\left( s+1\right) }\right) }\left(
y\right) \right\vert ^{p}d\mu \left( x,y\right)
\end{equation*}%
\begin{eqnarray*}
&&+\sum\limits_{i=0}^{N-1}\sum\limits_{j=i}^{N-1}\sum\limits_{s=i}^{N-1}\int%
\limits_{J_{i}\times J_{j}}\left\vert \sum\limits_{k=0}^{2^{s}-1}D_{\alpha
_{1}\left( n,k+n^{\left( s+1\right) }\right) }\left( x\right) \right. \\
&&\left. \times D_{\alpha _{2}\left( n,k+n^{\left( s+1\right) }\right)
}\left( y\right) \right\vert ^{p}d\mu \left( x,y\right) \\
&&=:A_{1}+A_{2}.
\end{eqnarray*}

Since for every $n\in\mathbb N$ and $x\in J_i = I_i\setminus I_{i+1}$ we have $\vert D_n(x)\vert \le c2^i$ then we also have
\begin{equation*}
\left\vert D_{\alpha _{1}\left( n,k\right) }\left( x\right) D_{\alpha
_{2}\left( n,k\right) }\left( y\right) \right\vert \le c2^{i+j},\left(
x,y\right) \in J_{i}\times J_{j}.
\end{equation*}%
For $A_{1}$ this implies in the case of $1>\left( p>2/3\right) $%
\begin{equation*}
A_{1}\leq
c_{p}\sum\limits_{i=0}^{N}\sum\limits_{j=i}^{N}\sum\limits_{s=0}^{i-1}\frac{%
2^{p\left( s+i+j\right) }}{2^{i+j}}\leq c_{p}2^{N\left( 3p-2\right) }.
\end{equation*}

If $p=1$, then for $A_1$ we have

\[
\begin{split}
 A_{1} & \leq
c_{1}\sum\limits_{i=0}^{N}\sum\limits_{j=i}^{N}\sum\limits_{s=0}^{i-1}\frac{%
2^{s+i+j }}{2^{i+j}}  \le c_{1}\sum\limits_{i=0}^{N}\sum\limits_{j=i}^{N} 2^i \\
& \le c_{1}\sum\limits_{i=0}^{N} (N-i+1)2^i \le c_12^N\leq c_{p}2^{N\left( 3p-2\right) }.
\end{split}
\]

That is, for every $\left( p>2/3\right)$
\begin{equation}
A_{1}\leq
c_{p}\sum\limits_{i=0}^{N}\sum\limits_{j=i}^{N}\sum\limits_{s=0}^{i-1}\frac{%
2^{p\left( s+i+j\right) }}{2^{i+j}}\leq c_{p}2^{N\left( 3p-2\right) }.
\label{A1}
\end{equation}

Turn our attention to $A_{2}$. By (\ref{dir}) for $x\in J_{i}=I_i\setminus I_{i+1}$ we can
write%
\begin{equation*}
D_{\alpha _{1}(n,k+n^{\left( s+1\right) })}\left( x\right) =-w_{\left(
\alpha _{1}\left( n,k+n^{\left( s+1\right) }\right) \right) ^{\left(
i+1\right) }}\left( x\right)
\end{equation*}%
\begin{equation*}
\times \left( \sum\limits_{r=0}^{i-1}\left( \alpha _{1}\left( n,k+n^{\left(
s+1\right) }\right) \right) _{r}2^{r}-\left( \alpha _{1}\left( n,k+n^{\left(
s+1\right) }\right) \right) _{i}2^{i}\right).
\end{equation*}

Set%
\begin{equation*}
\theta \left( s,k,i\right) :=\sum\limits_{r=0}^{i-1}\left( \alpha _{1}\left(
n,k+n^{\left( s+1\right) }\right) \right) _{r}2^{r}-\left( \alpha _{1}\left(
n,k+n^{\left( s+1\right) }\right) \right) _{i}2^{i}.
\end{equation*}

Apply the Cauchy-Schwarz inequality and follow the method of \cite{GatMar}:%
\begin{equation*}
\int\limits_{J_{i}\times J_{j}}\left\vert
\sum\limits_{k=0}^{2^{s}-1}D_{\alpha _{1}\left( n,k+n^{\left( s+1\right)
}\right) }\left( x\right) D_{\alpha _{2}\left( n,k+n^{\left( s+1\right)
}\right) }\left( y\right) \right\vert ^{p}d\mu \left( x,y\right)
\end{equation*}%
\begin{eqnarray*}
&\leq &\int\limits_{J_{j}}2^{-i\left( 1-p/2\right) }\Biggl( %
\int\limits_{J_{i}}\Biggl\vert \sum\limits_{k=0}^{2^{s}-1}D_{\alpha
_{1}\left( n,k+n^{\left( s+1\right) }\right) }\left( x\right) \\
&& \times D_{\alpha _{2}\left( n,k+n^{\left( s+1\right) }\right) }\left(
y\right) \Biggr\vert ^{2}d\mu \left( x\right) \Biggr) ^{p/2}d\mu \left(
y\right)
\end{eqnarray*}%
\begin{equation*}
=\int\limits_{J_{j}}2^{-i\left( 1-p/2\right) }\Biggl( \int\limits_{J_{i}}%
\sum\limits_{k,l=0}^{2^{s}-1}D_{\alpha _{1}\left( n,k+n^{\left( s+1\right)
}\right) }\left( x\right)
\end{equation*}%
\begin{equation*}
\times D_{\alpha _{1}\left( n,l+n^{\left( s+1\right) }\right) }\left(
x\right) D_{\alpha _{2}\left( n,k+n^{\left( s+1\right) }\right) }\left(
y\right) D_{\alpha _{2}\left( n,l+n^{\left( s+1\right) }\right) }\left(
y\right) d\mu \left( x\right) \Biggr) ^{p/2}d\mu \left( y\right)
\end{equation*}%
\begin{equation*}
=\int\limits_{J_{j}}2^{-i\left( 1-p/2\right) }\Biggl( \int\limits_{J_{i}}%
\sum\limits_{k,l=0}^{2^{s}-1}w_{\left( \alpha _{1}\left( n,k+n^{\left(
s+1\right) }\right) \right) ^{\left( i+1\right) }}\left( x\right)
\end{equation*}%
\begin{equation*}
\times w_{\left( \alpha _{1}\left( n,l+n^{\left( s+1\right) }\right) \right)
^{\left( i+1\right) }}\left( x\right) \theta \left( s,k,i\right) \theta
\left( s,l,i\right)
\end{equation*}%
\begin{equation*}
\times D_{\alpha _{2}\left( n,k+n^{\left( s+1\right) }\right) }\left(
y\right) D_{\alpha _{2}\left( n,l+n^{\left( s+1\right) }\right) }\left(
y\right) d\mu \left( x\right) \Biggr) ^{p/2}d\mu \left( y\right)
\end{equation*}%
\begin{equation*}
=\int\limits_{J_{j}}2^{-i\left( 1-p/2\right) }\Biggl( \sum%
\limits_{k,l=0}^{2^{s}-1}\theta \left( s,k,i\right) \theta \left(
s,l,i\right)
\end{equation*}%
\begin{equation*}
\times D_{\alpha _{2}\left( n,k+n^{\left( s+1\right) }\right) }\left(
y\right) D_{\alpha _{2}\left( n,l+n^{\left( s+1\right) }\right) }\left(
y\right) \int\limits_{J_{i}}w_{\left( \alpha _{1}\left( n,k+n^{\left(
s+1\right) }\right) \right) ^{\left( i+1\right) }}\left( x\right)
\end{equation*}%
\begin{equation*}
\times w_{\left( \alpha _{1}\left( n,l+n^{\left( s+1\right) }\right) \right)
^{\left( i+1\right) }}\left( x\right) d\mu \left( x\right) \Biggr) %
^{p/2}d\mu \left( y\right).
\end{equation*}

Discuss the integral on the set $J_i=I_i\setminus I_{i+1}$%
\begin{equation}
\int\limits_{J_{i}}w_{( \alpha _{1}( n,k+n^{\left( s+1\right) }) )^{\left(
i+1\right) }}\left( x\right) w_{(\alpha _{1}(n,l+n^{\left( s+1\right)
}))^{\left( i+1\right) }}\left( x\right) d\mu \left( x\right).
\end{equation}\label{ortog}%
If it differs from zero, then the $i+1$-th, $i+2$-th, ... coordinates of $%
\alpha _{1}(n,n^{\left( s+1\right) }+k)$ and $\alpha _{1}(n,n^{\left(
s+1\right) }+l)$ should be equal.

We have that for every $k$ there
exists only a bounded numbers of $l$'s for which $\alpha _{1}(n,n^{\left(
s+1\right) }+k)=\alpha _{1}(n,n^{\left( s+1\right) }+l)$. These facts give
that for every $k$ there exists - at most - $C2^{i}$ number of $l$'s for
which this integral is not zero. This will be very important in the estimation of
$A_2$ because - at first sight - $k$ and $l$ are elements of $\{0,1,\dots, 2^s-1\}$ and consequently
this would mean $2^{2s}$ addends. But this is not the case, because for every $k\in \{0,1,\dots, 2^s-1\}$ the number of $l$'s to be taken
is $2^i$ because we need to take the integrals only when they are not zero. Thus, the number of $(k,l)$ pairs we have to take account is only $2^{s+i}$ and not $2^{2s}$.

Consequently, apply the facts that $\left\vert \theta \left( s,k,i\right)
\right\vert \leq c2^{i},\left\vert D_{\alpha _{2}\left( n,k+n^{\left(
s+1\right) }\right) }\left( y\right) \right\vert \leq c2^{j}$
we obtain

\begin{equation*}
\begin{split}
&\int\limits_{J_{j}}2^{-i\left( 1-p/2\right) }\Biggl( \sum%
\limits_{k,l=0}^{2^{s}-1}\theta \left( s,k,i\right) \theta \left(
s,l,i\right)\\
& \times D_{\alpha _{2}\left( n,k+n^{\left( s+1\right) }\right) }\left(
y\right) D_{\alpha _{2}\left( n,l+n^{\left( s+1\right) }\right) }\left(
y\right) \int\limits_{J_{i}}w_{\left( \alpha _{1}\left( n,k+n^{\left(
s+1\right) }\right) \right) ^{\left( i+1\right) }}\left( x\right)
\\
& \times w_{\left( \alpha _{1}\left( n,l+n^{\left( s+1\right) }\right) \right)
^{\left( i+1\right) }}\left( x\right) d\mu \left( x\right) \Biggr)
^{p/2}d\mu \left( y\right)\\
& \le c\int\limits_{J_{j}}2^{-i\left( 1-p/2\right) }\Biggl( 2^{s+i}2^i 2^i 2^j 2^j\Biggr)^{p/2} d\mu(y) =  c\int\limits_{J_{j}}2^{-i\left( 1-p/2\right) }\Biggl( 2^{s+i}2^{2i+2j+s}\Biggr)^{p/2} d\mu(y).
\end{split}
\end{equation*}

This gives that for $A_{2}$ we
obtain%

\begin{eqnarray}
A_{2} &\leq
&c_{p}\sum\limits_{i=0}^{N}\sum\limits_{j=i}^{N}\sum\limits_{s=i}^{N}\int%
\limits_{J_{j}}2^{-i\left( 1-p/2\right) }\left( 2^{2i+2j+s}\right)
^{p/2}d\mu \left( y\right)  \label{A2_1} \\
&\leq &c_{p}2^{Np/2}\sum\limits_{i=0}^{N}2^{i\left( 3p/2-1\right)
}\sum\limits_{j=i}^{N}2^{j\left( p-1\right) }  \notag \\
&\leq &c_{p}2^{Np/2}\sum\limits_{i=0}^{N}2^{i\left( 5p/2-2\right) }  \notag
\\
&\leq &c_{p}2^{N\left( 3p-2\right) },  \notag
\end{eqnarray}%
for $p\in \left( 4/5,1\right)$. Let $p=1$. Then%
\begin{eqnarray}
A_{2} &\leq &2^{N/2}\sum\limits_{i=0}^{N}2^{i/2}\left( N-i+1\right)
\label{A2_2} \\
&\leq &2^{N}\sum\limits_{i=0}^{N}\frac{\left( N-i+1\right) }{2^{\left(
N-i\right) /2}}\leq c2^{N}.  \notag
\end{eqnarray}

Combining (\ref{A+B})-(\ref{A2_2}) we complete the proof of Lemma \ref%
{lemma2}.
\end{proof}

\begin{corollary}
\label{cor1}Let $\alpha _{1}\left( n,k\right) =k,\alpha _{2}\left(
n,k\right) =n-k,k=0,...,n-1,n\in \mathbb{P}$, $p\in (4/5,1].$ Then%
\begin{equation*}
\sup\limits_{1\leq n\leq 2^{N}}\int\limits_{\overline{I}_{N}\times \overline{%
I}_{N}}\left\vert \sum\limits_{k=0}^{n-1}D_{k}\left( x\right) D_{n-k}\left(
y\right) \right\vert ^{p}d\mu \left( x,y\right) \leq c_{p}2^{N\left(
3p-2\right) }.
\end{equation*}
\end{corollary}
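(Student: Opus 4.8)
The plan is to recognize the corollary as an immediate specialization of Lemma \ref{lemma2}. With the given choices $\alpha_{1}(n,k)=k$ and $\alpha_{2}(n,k)=n-k$, the inner double sum $\sum_{k=0}^{n-1}D_{\alpha_{1}(n,k)}(x)D_{\alpha_{2}(n,k)}(y)$ reduces to exactly $\sum_{k=0}^{n-1}D_{k}(x)D_{n-k}(y)$, the domain of integration $\overline{I}_{N}\times\overline{I}_{N}$ is the same, and the target bound $c_{p}2^{N(3p-2)}$ is identical. Hence the only thing I need to do is confirm that this pair $(\alpha_{1},\alpha_{2})$ satisfies the bounded-multiplicity hypothesis (\ref{alpa1}) under which Lemma \ref{lemma2} was established; once that is checked, the conclusion is read off verbatim.

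Verifying (\ref{alpa1}) is where the (minimal) work lies, and it amounts to a one-line injectivity observation. For $j=1$ the map $l\mapsto\alpha_{1}(n,l)=l$ is the identity, so for each fixed $k$ the equation $\alpha_{1}(n,l)=\alpha_{1}(n,k)$ forces $l=k$, and the counted set is a singleton. For $j=2$ the map $l\mapsto\alpha_{2}(n,l)=n-l$ is strictly decreasing on $\{0,\dots,n-1\}$, hence injective, so $\alpha_{2}(n,l)=\alpha_{2}(n,k)$ again forces $l=k$. In both cases the cardinality in (\ref{alpa1}) equals $1$, so the hypothesis holds with the absolute constant $C=1$, uniformly in $n$ and $k$.

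With (\ref{alpa1}) confirmed, I would simply invoke Lemma \ref{lemma2} for these two coordinate functions and obtain the stated estimate, with the restriction $p\in(4/5,1]$ inherited directly from the lemma. I do not expect any genuine obstacle here. The substance of the estimate, namely the orthogonality count of the Walsh factors on each $J_{i}$, the Cauchy-Schwarz splitting, and the summation over the ranges of $i,j,s$, has already been absorbed into Lemma \ref{lemma2}; and the injectivity of the two maps $k\mapsto k$ and $k\mapsto n-k$ is precisely the property that the condition (\ref{alpa1}) was designed to capture. Thus the corollary is a direct instance of the lemma rather than a result requiring a fresh argument.
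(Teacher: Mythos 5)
Your proposal is correct and is exactly the intended argument: the paper states Corollary \ref{cor1} without proof precisely because it is the specialization of Lemma \ref{lemma2} to $\alpha_{1}(n,k)=k$, $\alpha_{2}(n,k)=n-k$, and your verification that both maps are injective in $l$ (so that condition (\ref{alpa1}) holds with $C=1$) is the only check needed.
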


\begin{corollary}
\label{cor2}Let $p\in (4/5,1]$. Then
\begin{equation*}
\sup\limits_{0\leq q<2^{N}}\int\limits_{\overline{I}_{N}\times \overline{I}%
_{N}}\left\vert \sum\limits_{k=q}^{2^{N}-1}D_{k}\left( x\right)
D_{k-q}\left( y\right) \right\vert ^{p}d\mu \left( x,y\right) \leq
c_{p}2^{N\left( 3p-2\right) }.
\end{equation*}
\end{corollary}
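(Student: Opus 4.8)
The plan is to view the double sum in Corollary~\ref{cor2} as a particular instance of the Marcinkiewicz-like kernels of Lemma~\ref{lemma2}, after a harmless reindexing, and then merely to check that the relevant index maps obey the separation condition~(\ref{alpa1}).

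First I would substitute $m=k-q$, turning the inner sum $\sum_{k=q}^{2^N-1}D_k(x)D_{k-q}(y)$ into $\sum_{m=0}^{2^N-1-q}D_{m+q}(x)D_m(y)$. Writing $n:=2^N-q$, so that $m$ runs from $0$ to $n-1$, this is
\begin{equation*}
\sum_{m=0}^{n-1}D_{m+q}(x)\,D_m(y),
\end{equation*}
which has exactly the shape required by Lemma~\ref{lemma2} with the index functions
\begin{equation*}
\alpha_1(n,m):=m+q=m+2^N-n,\qquad \alpha_2(n,m):=m .
\end{equation*}

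Next I would verify~(\ref{alpa1}). For fixed $n$ both $m\mapsto\alpha_1(n,m)$ and $m\mapsto\alpha_2(n,m)$ are strictly increasing, hence injective, so the sets $\{l:\alpha_j(n,l)=\alpha_j(n,k)\}$ are singletons and~(\ref{alpa1}) holds with $C=1$ for $j=1,2$. Lemma~\ref{lemma2} then delivers, for each admissible $q$ (equivalently each $n$ with $1\le n\le 2^N$), the estimate $\int_{\overline{I}_N\times\overline{I}_N}|\,\cdots\,|^p\,d\mu\le c_p2^{N(3p-2)}$ for the reindexed integrand.

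Finally, as $q$ runs through $\{0,1,\dots,2^N-1\}$ the parameter $n=2^N-q$ runs through $\{1,\dots,2^N\}$, so the supremum over $q$ in the corollary is precisely the supremum over $1\le n\le 2^N$ controlled by Lemma~\ref{lemma2}; taking suprema preserves the bound. Thus the argument is essentially immediate, and the sole point demanding any care is the (trivial) injectivity check of the two index maps --- there is no substantive obstacle beyond the bookkeeping of the shift.
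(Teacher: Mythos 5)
Your argument is correct, but it reaches Lemma \ref{lemma2} by a different route than the paper. The paper does not re-read the sum as a new Marcinkiewicz-like kernel: it applies the reflection identity $w_{2^{N}-1}D_{j}=D_{2^{N}}-D_{2^{N}-j}$ ($0\le j\le 2^{N}-1$) to the factors $D_{k+q}(x)$, so that the sum splits into $D_{2^{N}}(x)\sum_{k}D_{k}(y)$ (controlled by Lemma \ref{sup1}, and in fact vanishing on $\overline{I}_{N}$) plus $\sum_{k}D_{2^{N}-q-k}(x)D_{k}(y)$, which is exactly the kernel of Corollary \ref{cor1} with $n=2^{N}-q$; the proof is thus a two-line reduction to Corollary \ref{cor1}. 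You instead invoke Lemma \ref{lemma2} directly with $\alpha_{1}(n,m)=m+2^{N}-n$, $\alpha_{2}(n,m)=m$, verifying (\ref{alpa1}) with $C=1$ by injectivity. This is legitimate and arguably cleaner, since it makes visible that Corollaries \ref{cor1} and \ref{cor2} are both immediate instances of the same lemma. The one point you should state explicitly is that your $\alpha_{1}$ depends on $N$ as well as on $(n,m)$, whereas Lemma \ref{lemma2} is formulated for a single function $\alpha:\mathbb{N}^{2}\rightarrow\mathbb{N}^{2}$; this is harmless because the constant produced by the proof of Lemma \ref{lemma2} depends only on $p$ and on the constant $C$ of (\ref{alpa1}), which here equals $1$ uniformly in $N$ and $q$, but it is precisely the issue that the paper's reflection step avoids by landing on the $N$-free index maps of Corollary \ref{cor1}.
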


\begin{proof}[Proof of Corollary \protect\ref{cor2}]
Since
\begin{equation*}
w_{2^{N}-1}D_{j}=D_{2^{N}}-D_{2^{N}-j},j=0,1,...,2^{N}-1
\end{equation*}%
from Lemma \ref{sup1} and Corollary \ref{cor1} we can write%
\begin{equation*}
\int\limits_{\overline{I}_{N}\times \overline{I}_{N}}\left\vert
\sum\limits_{k=q}^{2^{N}-1}D_{k}\left( x\right) D_{k-q}\left( y\right)
\right\vert ^{p}d\mu \left( x,y\right)
\end{equation*}%
\begin{equation*}
\leq \int\limits_{\overline{I}_{N}\times \overline{I}_{N}}\left\vert
\sum\limits_{k=0}^{2^{N}-q}\left( D_{2^{N}}\left( x\right) -D_{2^{N}-\left(
k+q\right) }\left( x\right) \right) D_{k}\left( y\right) \right\vert
^{p}d\mu \left( x,y\right)
\end{equation*}%
\begin{equation*}
\leq c_{p}\int\limits_{\overline{I}_{N}\times \overline{I}%
_{N}}D_{2^{N}}\left( x\right) \left\vert
\sum\limits_{k=0}^{2^{N}-q}D_{k}\left( y\right) \right\vert ^{p}d\mu \left(
x,y\right)
\end{equation*}%
\begin{equation*}
+c_{p}\int\limits_{\overline{I}_{N}\times \overline{I}_{N}}\left\vert
\sum\limits_{k=0}^{2^{N}-q}D_{2^{N}-q-k}\left( x\right) D_{k}\left( y\right)
\right\vert ^{p}d\mu \left( x,y\right) \leq c_{p}2^{N\left( 3p-2\right) }.
\end{equation*}

Corollary \ref{cor2} is proved.
\end{proof}
It is well known that $D_0=0$ but to have symmetry, that is for some technical reason,  at same places we have to indicate the $0$th Dirichlet kernel in sums below.
\begin{lemma}
\label{lemma3}Let $n=2^{N}q_{1}+q_{2},0\leq q_{2}<2^{N}$. Then%
\begin{equation*}
K_{n}^{\vartriangle }\left( x,y\right)
=\sum\limits_{l=0}^{q_{1}-1}w_{l}\left( 2^{N}x\right) w_{q_{1}-l}\left(
2^{N}y\right) \sum\limits_{k=1}^{q_{2}-1}D_{k}\left( x\right)
D_{q_{2}-k}\left( y\right)
\end{equation*}%
\begin{equation*}
+D_{2^{N}}\left( y\right) \sum\limits_{l=0}^{q_{1}-1}w_{l}\left(
2^{N}x\right) D_{q_{1}-l}\left( 2^{N}y\right)
\sum\limits_{k=1}^{2^{N}}D_{k}\left( x\right)
\end{equation*}%
\begin{equation*}
+D_{2^{N}}\left( x\right) \sum\limits_{l=0}^{q_{1}-1}D_{l}\left(
2^{N}x\right) w_{q_{1}-l}\left( 2^{N}y\right) q_{2}K_{q_{2}}\left( y\right)
\end{equation*}%
\begin{equation*}
+D_{2^{N}}\left( x\right) D_{2^{N}}\left( y\right)
\sum\limits_{l=0}^{q_{1}-1}D_{l}\left( 2^{N}x\right) D_{q_{1}-l}\left(
2^{N}y\right) 2^{N}
\end{equation*}%
\begin{equation*}
-w_{2^{N}-1}\left( y\right) \sum\limits_{l=0}^{q_{1}-1}w_{l}\left(
2^{N}x\right) w_{q_{1}-l-1}\left( 2^{N}y\right)
\sum\limits_{k=q_{2}+1}^{2^{N}}D_{k}\left( x\right) D_{k-q_{2}-1}\left(
y\right)
\end{equation*}%
\begin{equation*}
-w_{2^{N}-1}\left( y\right) D_{2^{N}}\left( x\right)
\sum\limits_{l=0}^{q_{1}-1}D_{l}\left( 2^{N}x\right) w_{q_{1}-l-1}\left(
2^{N}y\right) \sum\limits_{k=q_{2}+1}^{2^{N}}D_{k-q_{2}}\left( y\right)
\end{equation*}%
\begin{equation*}
+w_{q_{1}}\left( 2^{N}x\right) \sum\limits_{k=1}^{q_{2}-1}D_{k}\left(
x\right) D_{q_{2}-k}\left( y\right) +D_{2^{N}}\left( x\right)
D_{q_{1}}\left( 2^{N}x\right) \sum\limits_{v=1}^{q_{2}-1}D_{v}\left(
y\right).
\end{equation*}
\end{lemma}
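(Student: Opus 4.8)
The plan is to prove the identity by a direct expansion of the kernel from its convolution-sum form, organizing the computation around the base-$2^{N}$ splitting of the summation index. Starting from the last line of \eqref{tr-kernel}, the task is to re-expand the sum $\sum_{i=1}^{n-1}D_{n-i}(x)D_{i}(y)$ (equivalently, by the substitution $i\mapsto n-i$, the symmetric form $\sum_{i=1}^{n-1}D_{i}(x)D_{n-i}(y)$); the right-hand side of the claim is exactly a regrouping of this sum. The two structural facts I would isolate first are the block factorization of the Dirichlet kernel,
\[
D_{2^{N}a+b}(z)=D_{2^{N}}(z)\,D_{a}(2^{N}z)+w_{a}(2^{N}z)\,D_{b}(z)\qquad(0\le b<2^{N}),
\]
together with $w_{2^{N}a}(z)=w_{a}(2^{N}z)$ and $D_{2^{N}a}(z)=D_{2^{N}}(z)D_{a}(2^{N}z)$, and the reflection identity $w_{2^{N}-1}(z)D_{j}(z)=D_{2^{N}}(z)-D_{2^{N}-j}(z)$ already used in the proof of Corollary \ref{cor2}. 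Both follow from \eqref{dir}.

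Next I would split the index $i=2^{N}a+b$ with $0\le b<2^{N}$. The range $1\le i\le n-1$ breaks into the full blocks $a=0,\dots,q_{1}-1$ (with $b$ running over all of $\{0,\dots,2^{N}-1\}$) and the top block $a=q_{1}$ (with $b=0,\dots,q_{2}-1$); the convention $D_{0}=0$ lets me include the endpoint $b=0$ freely, which is the reason the statement carries the $0$th Dirichlet kernels. Writing $n-i=2^{N}(q_{1}-a)+(q_{2}-b)$ when $b\le q_{2}$ and $n-i=2^{N}(q_{1}-a-1)+(2^{N}+q_{2}-b)$ when $b>q_{2}$ exposes the single ``borrow'' that occurs in the subtraction.

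The heart of the computation is then to insert the block factorization into both $D_{i}(x)$ and $D_{n-i}(y)$, expand each product into its four pieces (high$\cdot$high, high$\cdot$low, low$\cdot$high, low$\cdot$low), and sum over $b$ within each block. Summing a $b$-independent high$\cdot$high piece across a full residue block of length $2^{N}$ is what produces the factor $2^{N}$ (term four); summing the low factors $D_{b}$ across a block collapses them into the one-dimensional Fej\'er kernel $q_{2}K_{q_{2}}$ and into partial sums $\sum_{k}D_{k}$ (terms two, three, eight); and the low$\cdot$low pieces, whose two degrees always add up to $q_{2}$, reassemble into the triangular sums $\sum_{k=1}^{q_{2}-1}D_{k}(x)D_{q_{2}-k}(y)$ (terms one and seven). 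Finally, in the borrow subcase $b>q_{2}$ I would apply the reflection identity to rewrite $D_{2^{N}+q_{2}-b}$; this is exactly what introduces the factor $w_{2^{N}-1}(y)$ and the reflected indices $\sum_{k=q_{2}+1}^{2^{N}}D_{k}(x)D_{k-q_{2}-1}(y)$ and $\sum_{k=q_{2}+1}^{2^{N}}D_{k-q_{2}}(y)$, giving the two negative terms five and six. Collecting the pieces and matching them against the eight summands completes the proof.

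I expect the main obstacle to be bookkeeping rather than conceptual: keeping the base-$2^{N}$ digit accounting consistent across the borrow, making sure each of the four expanded pieces is routed to the correct one of the eight target terms, and tracking the summation endpoints (where the $D_{0}=0$ convention and the shift by one in the borrow case are easy to mishandle) so that the partial-sum limits come out exactly as stated.
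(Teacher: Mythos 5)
Your plan is correct and follows essentially the same route as the paper: split the summation index of $\sum_k D_k(x)D_{n-k}(y)$ into base-$2^{N}$ blocks, apply Fine's factorization $D_{k+l2^{N}}(x)=w_{l}(2^{N}x)D_{k}(x)+D_{2^{N}}(x)D_{l}(2^{N}x)$ to both factors, and handle the borrow case $b>q_{2}$ via the reflection identity $D_{2^{N}-j}=D_{2^{N}}-w_{2^{N}-1}D_{j}$, which is exactly how the paper's decomposition into $I_{2}$, $I_{11}$ and $I_{12}$ proceeds. The only (immaterial) difference is where you draw the block boundaries — the paper assigns $k=l2^{N}+k'$ with $k'\in\{1,\dots,2^{N}\}$ rather than $b\in\{0,\dots,2^{N}-1\}$ — which is the endpoint bookkeeping you already flagged.
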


\begin{proof}[Proof of Lemma \protect\ref{lemma3}]
We can write%
\begin{eqnarray}
K_{n}^{\vartriangle }\left( x,y\right)
&=&\sum\limits_{k=1}^{2^{N}q_{1}+q_{2}-1}D_{k}\left( x\right) D_{n-k}\left(
y\right)  \label{I1+I2} \\
&=&\sum\limits_{k=1}^{2^{N}q_{1}}D_{k}\left( x\right) D_{n-k}\left( y\right)
+\sum\limits_{k=1}^{q_{2}-1}D_{k+2^{N}q_{1}}\left( x\right)
D_{q_{2}-k}\left( y\right)  \notag \\
&=&I_{1}+I_{2}.  \notag
\end{eqnarray}

Since (see \cite[(6.1)]{fin})
\begin{equation}
D_{k+l2^{N}}\left( x\right) =w_{l}\left( 2^{N}x\right) D_{k}\left( x\right)
+D_{2^{N}}\left( x\right) D_{l}\left( 2^{N}x\right)  \label{D}
\end{equation}%
for $I_{2}$ we have%
\begin{eqnarray}
I_{2} &=&w_{q_{1}}\left( 2^{N}x\right)
\sum\limits_{k=1}^{q_{2}-1}D_{k}\left( x\right) D_{q_{2}-k}\left( y\right)
\label{I2} \\
&&+D_{2^{N}}\left( x\right) D_{q_{1}}\left( 2^{N}x\right)
\sum\limits_{k=1}^{q_{2}-1}D_{k}\left( y\right)  \notag
\end{eqnarray}

For $I_{1}$ we have%
\begin{eqnarray}
I_{1}
&=&\sum\limits_{l=0}^{q_{1}-1}\sum\limits_{k=1}^{2^{N}}D_{k+l2^{N}}\left(
x\right) D_{2^{N}q_{1}+q_{2}-l2^{N}-k}\left( y\right)  \label{I11+I12} \\
&=&\sum\limits_{l=0}^{q_{1}-1}\sum\limits_{k=1}^{q_{2}}D_{k+l2^{N}}\left(
x\right) D_{2^{N}\left( q_{1}-l\right) +q_{2}-k}\left( y\right)  \notag \\
&&+\sum\limits_{l=0}^{q_{1}-1}\sum\limits_{k=q_{2}+1}^{2^{N}}D_{k+l2^{N}}%
\left( x\right) D_{2^{N}\left( q_{1}-l\right) +q_{2}-k}\left( y\right)
\notag \\
&=&I_{11}+I_{12}.  \notag
\end{eqnarray}

From (\ref{D}) we get%
\begin{eqnarray}
I_{11} &=&\sum\limits_{l=0}^{q_{1}-1}w_{l}\left( 2^{N}x\right)
w_{q_{1-l}}\left( 2^{N}y\right) \sum\limits_{k=1}^{q_{2}}D_{k}\left(
x\right) D_{q_{2}-k}\left( y\right)  \label{I11} \\
&&+D_{2^{N}}\left( y\right) \sum\limits_{l=0}^{q_{1}-1}w_{l}\left(
2^{N}x\right) D_{q_{1-l}}\left( 2^{N}y\right)
\sum\limits_{k=1}^{q_{2}}D_{k}\left( x\right)  \notag \\
&&+D_{2^{N}}\left( x\right) \sum\limits_{l=0}^{q_{1}-1}D_{l}\left(
2^{N}x\right) w_{q_{1-l}}\left( 2^{N}y\right) q_{2}K_{q_{2}}\left( y\right)
\notag \\
&&+D_{2^{N}}\left( x\right) D_{2^{N}}\left( y\right)
\sum\limits_{l=0}^{q_{1}-1}D_{l}\left( 2^{N}x\right) D_{q_{1-l}}\left(
2^{N}y\right) q_{2}.  \notag
\end{eqnarray}

Turn our attention to $I_{12}$. From the simple calculation we can write ($%
q_{2}<k\leq 2^{N},0\leq l<q_{1}$)%
\begin{eqnarray*}
&&D_{2^{N}\left( q_{1}-l\right) -(k-q_{2})}\left( y\right) \\
&=&D_{2^{N}\left( q_{1}-l-1\right) +2^{N}-(k-q_{2})}\left( y\right) \\
&=&D_{2^{N}\left( q_{1}-l-1\right) }\left( y\right) +w_{2^{N}\left(
q_{1}-l-1\right) }\left( y\right) D_{2^{N}-(k-q_{2})}\left( y\right).
\end{eqnarray*}%
Since%
\begin{equation*}
D_{2^{N}-(k-q_{2})}\left( y\right) =D_{2^{N}}\left( y\right)
-w_{2^{N}-1}\left( y\right) D_{k-q_{2}}\left( y\right),
\end{equation*}%
we obtain%
\begin{eqnarray*}
&&D_{2^{N}\left( q_{1}-l\right) +q_{2}-k}\left( y\right) \\
&=&D_{2^{N}}\left( y\right) D_{q_{1}-l-1}\left( 2^{N}y\right)
+w_{q_{1}-l-1}\left( 2^{N}y\right) D_{2^{N}}\left( y\right) \\
&&-w_{q_{1}-l-1}\left( 2^{N}y\right) w_{2^{N}-1}\left( y\right)
D_{k-q_{2}}\left( y\right).
\end{eqnarray*}%
Hence from (\ref{D}), we have%
\begin{equation}
I_{12}=D_{2^{N}}\left( y\right) \sum\limits_{l=0}^{q_{1}-1}w_{l}\left(
2^{N}x\right) D_{q_{1}-l-1}\left( 2^{N}y\right)
\sum\limits_{k=q_{2}+1}^{2^{N}}D_{k}\left( x\right)  \label{I12}
\end{equation}%
\begin{equation*}
+D_{2^{N}}\left( y\right) D_{2^{N}}\left( x\right)
\sum\limits_{l=0}^{q_{1}-1}D_{l}\left( 2^{N}x\right) D_{q_{1}-l-1}\left(
2^{N}y\right) \left( 2^{N}-q_{2}\right)
\end{equation*}%
\begin{equation*}
+D_{2^{N}}\left( y\right) \sum\limits_{l=0}^{q_{1}-1}w_{l}\left(
2^{N}x\right) w_{q_{1}-l-1}\left( 2^{N}y\right)
\sum\limits_{k=q_{2}+1}^{2^{N}}D_{k}\left( x\right)
\end{equation*}%
\begin{equation*}
+D_{2^{N}}\left( y\right) D_{2^{N}}\left( x\right)
\sum\limits_{l=0}^{q_{1}-1}D_{l}\left( 2^{N}x\right) w_{q_{1}-l-1}\left(
2^{N}y\right) \left( 2^{N}-q_{2}\right)
\end{equation*}%
\begin{equation*}
-w_{2^{N}-1}\left( y\right) \sum\limits_{l=0}^{q_{1}-1}w_{l}\left(
2^{N}x\right) w_{q_{1}-l-1}\left( 2^{N}y\right)
\sum\limits_{k=q_{2}+1}^{2^{N}}D_{k}\left( x\right) D_{k-q_{2}}\left(
y\right)
\end{equation*}%
\begin{equation*}
-w_{2^{N}-1}\left( y\right) D_{2^{N}}\left( x\right)
\sum\limits_{l=0}^{q_{1}-1}D_{l}\left( 2^{N}x\right) w_{q_{1}-l-1}\left(
2^{N}y\right) \sum\limits_{k=q_{2}+1}^{2^{N}}D_{k-q_{2}}\left( y\right)
\end{equation*}%
\begin{equation*}
=D_{2^{N}}\left( y\right) \sum\limits_{l=0}^{q_{1}-1}w_{l}\left(
2^{N}x\right) D_{q_{1}-l}\left( 2^{N}y\right)
\sum\limits_{k=q_{2}+1}^{2^{N}}D_{k}\left( x\right)
\end{equation*}%
\begin{equation*}
+D_{2^{N}}\left( y\right) D_{2^{N}}\left( x\right)
\sum\limits_{l=0}^{q_{1}-1}D_{l}\left( 2^{N}x\right) D_{q_{1}-l}\left(
2^{N}y\right) \left( 2^{N}-q_{2}\right)
\end{equation*}%
\begin{equation*}
-w_{2^{N}-1}\left( y\right) \sum\limits_{l=0}^{q_{1}-1}w_{l}\left(
2^{N}x\right) w_{q_{1}-l-1}\left( 2^{N}y\right)
\sum\limits_{k=q_{2}+1}^{2^{N}}D_{k}\left( x\right) D_{k-q_{2}}\left(
y\right)
\end{equation*}%
\begin{equation*}
-w_{2^{N}-1}\left( y\right) D_{2^{N}}\left( x\right)
\sum\limits_{l=0}^{q_{1}-1}D_{l}\left( 2^{N}x\right) w_{q_{1}-l-1}\left(
2^{N}y\right) \sum\limits_{k=q_{2}+1}^{2^{N}}D_{k-q_{2}}\left( y\right) .
\end{equation*}

Combining (\ref{I1+I2}), (\ref{I11+I12}), (\ref{I11}) and (\ref{I12}) we
complete the proof of Lemma \ref{lemma3}.
\end{proof}

\begin{lemma}
\label{lemma4}Let $\left( x,y\right) \in \overline{I}_{N}\times \overline{I}%
_{N}$ and $n=2^{N}q_{1}+q_{2},0\leq q_{2}<2^{N}$. Then the following
inequality holds%
\begin{eqnarray*}
&&\int\limits_{I_{N}\times I_{N}}\left\vert K_{n}^{\bigtriangleup }\left(
x+s,y+t\right) \right\vert d\mu \left( s,t\right) \\
&\leq &\frac{c}{2^{3N}}\left\{ \left\vert
\sum\limits_{k=0}^{q_{2}-1}D_{k}\left( x\right) D_{q_{2}-k}\left( y\right)
\right\vert +\left\vert \sum\limits_{k=q_{2}+1}^{2^{N}}D_{k}\left( x\right)
D_{k-q_{2}}\left( y\right) \right\vert \right\} .
\end{eqnarray*}
\end{lemma}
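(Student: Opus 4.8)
The plan is to substitute the closed form of Lemma~\ref{lemma3} for $K_n^{\bigtriangleup}$ and exploit the hypothesis $(x,y)\in\overline{I}_N\times\overline{I}_N$, which forces $(x+s,y+t)\in\overline{I}_N\times\overline{I}_N$ for every $(s,t)\in I_N\times I_N$, since $I_N$ is a subgroup. Because $D_{2^N}$ vanishes off $I_N$, evaluating the eight summands of Lemma~\ref{lemma3} at the shifted point annihilates every term carrying a factor $D_{2^N}(x)$ or $D_{2^N}(y)$ (the second, third, fourth, sixth and eighth). Only three terms survive, and recombining the first and seventh, which share the Dirichlet block $\sum_{k=0}^{q_2-1}D_k(x)D_{q_2-k}(y)$, collapses them together with the fifth into
\[
K_n^{\bigtriangleup}(x+s,y+t)=\frac1n\Bigl(S_1(x,y)\,V_1-w_{2^N-1}(y+t)\,S_2(x,y)\,V_2\Bigr),
\]
where $S_1=\sum_{k=0}^{q_2-1}D_k(x)D_{q_2-k}(y)$, $S_2=\sum_{k=q_2+1}^{2^N}D_k(x)D_{k-q_2}(y)$, and $V_1=\sum_{l=0}^{q_1}w_l(2^N(x+s))w_{q_1-l}(2^N(y+t))$, $V_2=\sum_{l=0}^{q_1-1}w_l(2^N(x+s))w_{q_1-l-1}(2^N(y+t))$. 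I keep the Fej\'er factor $1/n$ explicit, Lemma~\ref{lemma3} being stated for the unnormalised kernel $\sum_{k=1}^{n-1}D_k(x)D_{n-k}(y)=nK_n^{\bigtriangleup}$, consistent with (\ref{tr-kernel}).

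The decisive structural point is that the two kinds of factors behave completely differently under the translation $(s,t)\in I_N\times I_N$. Every Dirichlet kernel in $S_1,S_2$ has index $\le 2^N$, hence depends only on $x_0,\dots,x_{N-1}$ (or $y_0,\dots,y_{N-1}$) and is invariant under shifts by $I_N$; likewise $w_{2^N-1}(y+t)=w_{2^N-1}(y)$ because $w_{2^N-1}(t)=\prod_{i<N}r_i(t)=1$ for $t\in I_N$. Thus $S_1$ and $S_2$ pull out of the $(s,t)$-integration as constants, and after taking absolute values and using $|w_{2^N-1}|=1$ I am reduced to estimating the coset $L_1$-averages of the purely oscillatory Walsh sums $V_1,V_2$, whose only dependence on $(s,t)$ sits in the arguments $2^N(x+s)$ and $2^N(y+t)$.

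For those averages I would change variables $a=2^Ns$, $b=2^Nt$: as $s$ (resp.\ $t$) runs through $I_N$, $a$ (resp.\ $b$) runs through $G$ with $\int_{I_N}f(2^Ns)\,d\mu(s)=2^{-N}\int_G f$, and translation invariance absorbs the fixed shifts $2^Nx,2^Ny$, giving
\[
\int_{I_N\times I_N}|V_j|\,d\mu(s,t)=2^{-2N}\Vert V_j\Vert_{L_1(G\times G)}.
\]
Since $\{w_l(u)w_{q-l}(v)\}_l$ is orthonormal in $L_2(G\times G)$, one has $\Vert V_j\Vert_{L_2}\le\sqrt{q_1+1}$, whence $\Vert V_j\Vert_{L_1}\le\Vert V_j\Vert_{L_2}\le\sqrt{q_1+1}$ because $\mu(G\times G)=1$. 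Combining with $n=2^Nq_1+q_2\ge 2^Nq_1$ yields
\[
\frac1n\,2^{-2N}\Vert V_j\Vert_{L_1}\le\frac{\sqrt{q_1+1}}{2^Nq_1}\,2^{-2N}\le c\,2^{-3N}\qquad(q_1\ge1),
\]
and adding the two contributions produces the asserted bound $c\,2^{-3N}\{|S_1|+|S_2|\}$, with $S_1,S_2$ in exactly the shape needed to invoke Corollaries~\ref{cor1}--\ref{cor2} afterwards.

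The hard part is this final step: $V_j$ carries about $q_1$ terms, so its $L_1$-norm can be as large as $\sqrt{q_1}$, and it is only the Fej\'er factor $1/n\le(2^Nq_1)^{-1}$ that tames the growth; crucially, orthogonality (an $L_2$ estimate) keeps $\Vert V_j\Vert_{L_1}$ at size $\sqrt{q_1}$ rather than the crude $q_1$, so that $\sqrt{q_1}/(2^Nq_1)\le2^{-N}$ survives. A secondary point requiring care is the term-by-term bookkeeping isolating the surviving summands and matching the indices of $S_1,S_2$ to those in the Corollaries; the degenerate range $q_1=0$ (i.e.\ $n<2^N$), where the surviving kernel is constant in $(s,t)$, falls outside this estimate and is absorbed into the elementary low-order part of the quasi-locality argument.
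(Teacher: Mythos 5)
Your proof is correct and follows essentially the same route as the paper's: substitute Lemma \ref{lemma3}, note that every summand carrying a factor $D_{2^{N}}(x)$ or $D_{2^{N}}(y)$ vanishes when $(x+s,y+t)\in\overline{I}_{N}\times\overline{I}_{N}$, pull the translation-invariant Dirichlet blocks $S_{1},S_{2}$ and the factor $w_{2^{N}-1}(y)$ out of the $(s,t)$-integral, and then bound the remaining Walsh polynomial. The one divergence is your last step: the paper simply uses the pointwise bound $\left\vert V_{j}\right\vert\leq q_{1}+1$ together with $\mu(I_{N}\times I_{N})=2^{-2N}$ and $q_{1}\leq n2^{-N}$, which already yields $q_{1}2^{-2N}/n\leq 2^{-3N}$, so your Cauchy--Schwarz/orthogonality refinement to $\sqrt{q_{1}+1}$ is valid but not, as you assert, crucial.
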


\begin{proof}[Proof of Lemma \protect\ref{lemma4}]
Since $x+s,y+t\notin I_{N}$ , by lemma \ref{lemma3} we have%
\begin{eqnarray*}
&&nK_{n}^{\bigtriangleup }\left( x,y\right) \\
&=&\sum\limits_{l=0}^{q_{1}-1}w_{l}\left( 2^{N}x\right) w_{q_{1}-l}\left(
2^{N}y\right) \sum\limits_{k=0}^{q_{2}-1}D_{k}\left( x\right)
D_{q_{2}-k}\left( y\right) \\
&&-w_{2^{N}-1}\left( y\right) \sum\limits_{l=0}^{q_{1}-1}w_{l}\left(
2^{N}x\right) w_{q_{1}-l-1}\left( 2^{N}y\right)
\sum\limits_{k=q_{2}+1}^{2^{N}}D_{k}\left( x\right) D_{k-q_{2}}\left(
y\right) \\
&&+w_{q_{1}}\left( 2^{N}x\right) \sum\limits_{k=0}^{q_{2}-1}D_{k}\left(
x\right) D_{q_{2}-k}\left( y\right) .
\end{eqnarray*}%
Consequently $\left( q_{1}\leq n2^{-N}\right) $%
\begin{equation*}
\int\limits_{I_{N}\times I_{N}}\left\vert K_{n}^{\bigtriangleup }\left(
x+s,y+t\right) \right\vert d\mu \left( s,t\right)
\end{equation*}%
\begin{equation*}
\leq \frac{1}{n}\left\vert \sum\limits_{k=0}^{q_{2}-1}D_{k}\left( x\right)
D_{q_{2}-k}\left( y\right) \right\vert
\end{equation*}%
\begin{equation*}
\times \int\limits_{I_{N}\times I_{N}}\left\vert
\sum\limits_{l=0}^{q_{1}-1}w_{l}\left( 2^{N}\left( x+s\right) \right)
w_{q_{1}-l}\left( 2^{N}\left( y+t\right) \right) \right\vert
\end{equation*}%
\begin{equation*}
+\frac{1}{n}\left\vert \sum\limits_{k=q_{2}+1}^{2^{N}}D_{k}\left( x\right)
D_{k-q_{2}}\left( y\right) \right\vert
\end{equation*}%
\begin{equation*}
\times \int\limits_{I_{N}\times I_{N}}\left\vert
\sum\limits_{l=0}^{q_{1}-1}w_{l}\left( 2^{N}\left( x+s\right) \right)
w_{q_{1}-l-1}\left( 2^{N}\left( y+t\right) \right) \right\vert
\end{equation*}%
\begin{equation*}
+\frac{1}{n2^{2N}}\left\vert \sum\limits_{k=0}^{q_{2}-1}D_{k}\left( x\right)
D_{q_{2}-k}\left( y\right) \right\vert
\end{equation*}%
\begin{equation*}
\leq \frac{cq_{1}}{n2^{2N}}\left\{ \left\vert
\sum\limits_{k=0}^{q_{2}-1}D_{k}\left( x\right) D_{q_{2}-k}\left( y\right)
\right\vert +\left\vert \sum\limits_{k=q_{2}+1}^{2^{N}}D_{k}\left( x\right)
D_{k-q_{2}}\left( y\right) \right\vert \right\}
\end{equation*}%
\begin{equation*}
\leq \frac{c}{2^{3N}}\left\{ \left\vert
\sum\limits_{k=0}^{q_{2}-1}D_{k}\left( x\right) D_{q_{2}-k}\left( y\right)
\right\vert +\left\vert \sum\limits_{k=q_{2}+1}^{2^{N}-1}D_{k}\left(
x\right) D_{k-q_{2}}\left( y\right) \right\vert \right\} .
\end{equation*}

Lemma \ref{lemma4} is proved.
\end{proof}

\begin{lemma}
\label{lemma5}Let $\left( x,y\right) \in \overline{I}_{N}\times I_{N}$ and $%
n=2^{N}q_{1}+q_{2},0\leq q_{2}<2^{N}$. Then the following inequality holds%
\begin{eqnarray*}
&&\int\limits_{I_{N}\times I_{N}}\left\vert K_{n}^{\bigtriangleup }\left(
x+s,y+\dotplus t\right) \right\vert d\mu \left( s,t\right) \\
&\leq &\frac{c}{2^{3N}}\left\{ \left\vert
\sum\limits_{k=0}^{q_{2}-1}D_{k}\left( x\right) D_{q_{2}-k}\left( y\right)
\right\vert +\left\vert \sum\limits_{k=q_{2}+1}^{2^{N}}D_{k}\left( x\right)
D_{k-q_{2}}\left( y\right) \right\vert \right. \\
&&\left. +D_{2^{N}}\left( y\right) \left\vert
\sum\limits_{k=1}^{2^{N}}D_{k}\left( x\right) \right\vert \right\} ,n\geq
2^{N}.
\end{eqnarray*}
\end{lemma}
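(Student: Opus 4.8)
The plan is to follow the proof of Lemma~\ref{lemma4} almost verbatim, the single new feature being that the hypothesis $y\in I_N$ (in place of $y\in\overline I_N$) forces one further term of Lemma~\ref{lemma3} to survive. First I would observe that for $(s,t)\in I_N\times I_N$ one has $x+s\in\overline I_N$ and $y+t\in I_N$, so by (\ref{dir}) $D_{2^N}(x+s)=0$ while $D_{2^N}(y+t)=2^N$. Substituting $(x+s,y+t)$ into the eight-term formula of Lemma~\ref{lemma3} and discarding every summand carrying a factor $D_{2^N}(x+s)=0$, exactly four terms survive: the first term (with $\sum_k D_k(x)D_{q_2-k}(y)$), the seventh term $w_{q_1}(2^N x)\sum_k D_k(x)D_{q_2-k}(y)$, the fifth term (with $w_{2^N-1}(y)$ and $\sum_{k=q_2+1}^{2^N}D_k(x)D_{k-q_2}(y)$), and the second term. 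Because $D_k$ depends only on the first $N$ coordinates for $k\le 2^N$, one has $D_k(x+s)=D_k(x)$ and $D_k(y+t)=D_k(y)$, so every Dirichlet factor in $x,y$ is constant in $(s,t)$ and may be pulled out of the integral.

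For the first, fifth and seventh terms the estimate is literally that of Lemma~\ref{lemma4}. After factoring out $\bigl|\sum_k D_k(x)D_{q_2-k}(y)\bigr|$ or $\bigl|\sum_{k=q_2+1}^{2^N}D_k(x)D_{k-q_2}(y)\bigr|$, there remains the integral over $I_N\times I_N$ of a product of at most two unimodular Walsh functions of arguments $2^N(x+s)$ and $2^N(y+t)$. Writing $u=2^N s,\ v=2^N t$, so that $\int_{I_N}F(2^N s)\,d\mu(s)=2^{-N}\int_G F\,d\mu$, the trivial bound $\bigl|\sum_{l=0}^{q_1-1}w_l(a)w_{q_1-l}(b)\bigr|\le q_1$ shows this integral is at most $c\,q_1 2^{-2N}$; together with $q_1/n\le 2^{-N}$ and $1/n\le 2^{-N}$ this yields the first two terms of the asserted bound, with the factor $c/2^{3N}$.

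The genuinely new and hardest step is the second term, which after inserting $D_{2^N}(y+t)=2^N$ and $D_k(x+s)=D_k(x)$ becomes
\begin{equation*}
2^N\Bigl(\sum_{k=1}^{2^N}D_k(x)\Bigr)\sum_{l=0}^{q_1-1}w_l(2^N(x+s))D_{q_1-l}(2^N(y+t)).
\end{equation*}
Here $D_{q_1-l}$ is no longer unimodular, so the trivial bound $|D_{q_1-l}|\le q_1$ would cost an extra factor $q_1$ and fail. Instead I would estimate $\int_{I_N\times I_N}\bigl|\sum_{l=0}^{q_1-1}w_l(2^N(x+s))D_{q_1-l}(2^N(y+t))\bigr|\,d\mu(s,t)$ by passing to $G\times G$ through $u=2^N s,\ v=2^N t$ and applying the Cauchy--Schwarz inequality. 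Expanding the square and using the orthonormality relation $\int_G w_l(a+u)w_{l'}(a+u)\,d\mu(u)=\delta_{l,l'}$ collapses the double sum to its diagonal, and then $\|D_m\|_2^2=m$ leaves $\sum_{l=0}^{q_1-1}(q_1-l)\le q_1^2$; hence this integral is at most $c\,q_1 2^{-2N}$.

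Combining these estimates, the second-term contribution to $\int_{I_N\times I_N}\left|K_n^{\bigtriangleup}(x+s,y+t)\right|\,d\mu(s,t)$ is at most
\begin{equation*}
\frac{1}{n}\cdot 2^N\Bigl|\sum_{k=1}^{2^N}D_k(x)\Bigr|\cdot c\,q_1 2^{-2N}\le \frac{c}{2^{2N}}\Bigl|\sum_{k=1}^{2^N}D_k(x)\Bigr|=\frac{c}{2^{3N}}D_{2^N}(y)\Bigl|\sum_{k=1}^{2^N}D_k(x)\Bigr|,
\end{equation*}
where I used $q_1/n\le 2^{-N}$ and $D_{2^N}(y)=2^N$. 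Adding this to the two contributions of the previous paragraph gives the claimed inequality. I expect the sharp $L^2$-orthogonality estimate for the mixed Walsh--Dirichlet sum to be the only nonroutine point, the remainder being a direct transcription of Lemma~\ref{lemma4} with one extra surviving term.
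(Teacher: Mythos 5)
Your proposal is correct and follows essentially the same route as the paper: substitute into the decomposition of Lemma \ref{lemma3}, use $D_{2^N}(x+s)=0$ and $D_{2^N}(y+t)=2^N$ to isolate the four surviving terms, bound the unimodular Walsh sums trivially by $q_1$, and handle the mixed sum $\sum_{l}w_l(u)D_{q_1-l}(v)$ by Cauchy--Schwarz and orthogonality exactly as in the paper (your bound $\bigl(\sum_{l}(q_1-l)\bigr)^{1/2}\le q_1$ is in fact stated more carefully than the paper's). No gaps.
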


\begin{proof}[Proof of Lemma \protect\ref{lemma5}]
From (\ref{dir}) and Lemma \ref{lemma3} we get%
\begin{eqnarray*}
&&nK_{n}^{\bigtriangleup }\left( x,y\right) \\
&=&\sum\limits_{l=0}^{q_{1}-1}w_{l}\left( 2^{N}x\right) w_{q_{1}-l}\left(
2^{N}y\right) \sum\limits_{k=0}^{q_{2}-1}D_{k}\left( x\right)
D_{q_{2}-k}\left( y\right) \\
&&+\left( \sum\limits_{k=1}^{2^{N}-1}D_{k}\left( x\right) \right)
D_{2^{N}}\left( y\right) \sum\limits_{l=0}^{q_{1}-1}w_{l}\left(
2^{N}x\right) D_{q_{1}-l}\left( 2^{N}y\right) \\
&&-w_{2^{N}-1}\left( y\right) \sum\limits_{l=0}^{q_{1}-1}w_{l}\left(
2^{N}x\right) w_{q_{1}-l-1}\left( 2^{N}y\right)
\sum\limits_{k=q_{2}+1}^{2^{N}}D_{k}\left( x\right) D_{k-q_{2}}\left(
y\right) \\
&&+w_{q_{1}}\left( 2^{N}x\right) \sum\limits_{k=0}^{q_{2}-1}D_{k}\left(
x\right) D_{q_{2}-k}\left( y\right) .
\end{eqnarray*}

Consequently,
\begin{equation}
\int\limits_{I_{N}\times I_{N}}\left\vert K_{n}^{\bigtriangleup }\left(
x+s,y+t\right) \right\vert d\mu \left( s,t\right)   \label{l5}
\end{equation}%
\begin{equation*}
\leq \frac{1}{n}\left\vert \sum\limits_{k=0}^{q_{2}-1}D_{k}\left( x\right)
D_{q_{2}-k}\left( y\right) \right\vert
\end{equation*}%
\begin{equation*}
\times \int\limits_{I_{N}\times I_{N}}\left\vert
\sum\limits_{l=0}^{q_{1}-1}w_{l}\left( 2^{N}\left( x+s\right) \right)
w_{q_{1}-l}\left( 2^{N}\left( y+t\right) \right) \right\vert d\mu \left(
s,t\right)
\end{equation*}%
\begin{equation*}
+\frac{D_{2^{N}}\left( y\right) }{n}\left\vert
\sum\limits_{k=1}^{2^{N}-1}D_{k}\left( x\right) \right\vert
\end{equation*}%
\begin{equation*}
\times \int\limits_{I_{N}\times I_{N}}\left\vert
\sum\limits_{l=0}^{q_{1}-1}w_{l}\left( 2^{N}\left( x+s\right) \right)
D_{q_{1}-l}\left( 2^{N}\left( y+t\right) \right) \right\vert d\mu \left(
s,t\right)
\end{equation*}%
\begin{equation*}
+\frac{1}{n}\left\vert \sum\limits_{k=q_{2}}^{2^{N}-1}D_{k}\left( x\right)
D_{k-q_{2}}\left( y\right) \right\vert
\end{equation*}%
\begin{equation*}
\times \int\limits_{I_{N}\times I_{N}}\left\vert
\sum\limits_{l=0}^{q_{1}-1}w_{l}\left( 2^{N}\left( x\dotplus s\right)
x\right) w_{q_{1}-l-1}\left( 2^{N}\left( y\dotplus t\right) \right)
\right\vert d\mu \left( s,t\right)
\end{equation*}%
\begin{equation*}
+\frac{1}{2^{3N}}\left\vert \sum\limits_{k=0}^{q_{2}-1}D_{k}\left( x\right)
D_{q_{2}-k}\left( y\right) \right\vert
\end{equation*}%
\begin{equation*}
\leq \frac{q_{1}}{n2^{2N}}\left\vert \sum\limits_{k=0}^{q_{2}-1}D_{k}\left(
x\right) D_{q_{2}-k}\left( y\right) \right\vert
\end{equation*}%
\begin{equation*}
+\frac{D_{2^{N}}\left( y\right) }{n2^{2N}}\left\vert
\sum\limits_{k=1}^{2^{N}-1}D_{k}\left( x\right) \right\vert
\int\limits_{G\times G}\left\vert \sum\limits_{l=0}^{q_{1}-1}w_{l}\left(
u\right) D_{q_{1}-l}\left( v\right) \right\vert d\mu \left( u,v\right)
\end{equation*}%
\begin{equation*}
+\frac{q_{1}}{n2^{2N}}\left\vert \sum\limits_{k=q_{2}+1}^{2^{N}}D_{k}\left(
x\right) D_{k-q_{2}}\left( y\right) \right\vert
\end{equation*}%
\begin{equation*}
+\frac{1}{2^{3N}}\left\vert \sum\limits_{k=0}^{q_{2}-1}D_{k}\left( x\right)
D_{q_{2}-k}\left( y\right) \right\vert .
\end{equation*}%
Since
\begin{eqnarray*}
&&\int\limits_{G\times G}\left\vert \sum\limits_{l=0}^{q_{1}-1}w_{l}\left(
u\right) D_{q_{1}-l}\left( v\right) \right\vert d\mu \left( u,v\right)  \\
&\leq &\left( \int\limits_{G\times G}\left\vert
\sum\limits_{l=0}^{q_{1}-1}w_{l}\left( u\right) D_{q_{1}-l}\left( v\right)
\right\vert ^{2}d\mu \left( u,v\right) \right) ^{1/2} \\
&=&\left(\int\limits_{G}\sum\limits_{l=0}^{q_{1}-1}D_{q_{1}-l}^{2}\left( v\right)
d\mu \left( v\right) \right)^{1/2} \\
&=&\sum\limits_{l=0}^{q_{1}-1}\left( q_{1}-l\right) \leq q_{1},
\end{eqnarray*}%
by (\ref{l5}) we complete the proof of Lemma \ref{lemma5}.
\end{proof}

\section{Proofs of main results}

\begin{proof}[Proof of Theorem \protect\ref{main}]
By Lemma \ref{weisz}, the proof of theorem will be complete if we show that
the operator $\sigma _{n}^{\bigtriangleup }$ is $p$-quasi-local for each $%
4/5<p\leq 1$ and bounded from $L_{\infty }\left( G\times G\right) $ to $%
L_{\infty }\left( G\times G\right) $ $.$

First, we prove the boundedness from $L_{\infty }\left( G\times G\right) $
to $L_{\infty }\left( G\times G\right) $.  Ee can write%
\begin{equation}
\int\limits_{G\times G}\frac{1}{n}\left\vert
\sum\limits_{k=1}^{n-1}D_{k}\left( x\right) D_{n-k}\left( y\right)
\right\vert d\mu \left( x,y\right)  \label{D1-D4}
\end{equation}%
\begin{equation*}
=\int\limits_{\overline{I}_{|n|}\times \overline{I}_{|n|}}\frac{1}{n}%
\left\vert \sum\limits_{k=1}^{n-1}D_{k}\left( x\right) D_{n-k}\left(
y\right) \right\vert d\mu \left( x,y\right)
\end{equation*}%
\begin{equation*}
+\int\limits_{\overline{I}_{|n|}\times I_{|n|}}\frac{1}{n}\left\vert
\sum\limits_{k=1}^{n-1}D_{k}\left( x\right) D_{n-k}\left( y\right)
\right\vert d\mu \left( x,y\right)
\end{equation*}%
\begin{equation*}
+\int\limits_{I_{|n|}\times \overline{I}_{|n|}}\frac{1}{n}\left\vert
\sum\limits_{k=1}^{n-1}D_{k}\left( x\right) D_{n-k}\left( y\right)
\right\vert d\mu \left( x,y\right)
\end{equation*}%
\begin{equation*}
+\int\limits_{I_{|n|}\times I_{|n|}}\frac{1}{n}\left\vert
\sum\limits_{k=1}^{n-1}D_{k}\left( x\right) D_{n-k}\left( y\right)
\right\vert d\mu \left( x,y\right)
\end{equation*}

From Corollary \ref{cor1} we have%
\begin{equation}
\sup\limits_{n}\int\limits_{\overline{I}_{N}\times \overline{I}_{N}}\frac{1}{%
n}\left\vert \sum\limits_{k=0}^{n-1}D_{k}\left( x\right) D_{n-k}\left(
y\right) \right\vert d\mu \left( x,y\right) <\infty .  \label{1}
\end{equation}

Using Lemma \ref{lemma1} for $p=1$ we have%
\begin{equation}
\int\limits_{\overline{I}_{|n|}\times I_{|n|}}\frac{1}{n}\left\vert
\sum\limits_{k=1}^{n-1}D_{k}\left( x\right) D_{n-k}\left( y\right)
\right\vert d\mu \left( x,y\right)  \label{2}
\end{equation}%
\begin{equation*}
=\frac{1}{n2^{|n|}}\int\limits_{G}\left\vert \sum\limits_{k=1}^{n-1}\left(
n-k\right) D_{k}\left( x\right) \right\vert d\mu \left( x,y\right) \leq
c<\infty ,n\in \mathbb{P}.
\end{equation*}

Analogously, we can prove that%
\begin{equation}
\sup\limits_{n}\int\limits_{I_{|n|}\times \overline{I}_{|n|}}\frac{1}{n}%
\left\vert \sum\limits_{k=1}^{n-1}D_{k}\left( x\right) D_{n-k}\left(
y\right) \right\vert d\mu \left( x,y\right) <\infty .  \label{3}
\end{equation}

From a simple calculation we have%
\begin{eqnarray}
&&\int\limits_{I_{|n|}\times I_{|n|}}\frac{1}{n}\left\vert
\sum\limits_{k=1}^{n-1}D_{k}\left( x\right) D_{n-k}\left( y\right)
\right\vert d\mu \left( x,y\right)  \label{4} \\
&\leq &\frac{1}{2^{3N}}\sum\limits_{k=1}^{n-1}k\left( n-k\right) \leq
c<\infty .  \notag
\end{eqnarray}

Combining (\ref{D1-D4})-(\ref{4}) we conclude that%
\begin{equation*}
\sup\limits_{n}\int\limits_{G\times G}\frac{1}{n}\left\vert
\sum\limits_{k=1}^{n-1}D_{k}\left( x\right) D_{n-k}\left( y\right)
\right\vert d\mu \left( x,y\right) <\infty .
\end{equation*}

Hence, from\ (\ref{triang}) and (\ref{tr-kernel}) we conclude that%
\begin{equation*}
\left\Vert \sigma _{n }^{\bigtriangleup }f\right\Vert _{\infty }\leq
c\left\Vert f\right\Vert _{\infty }.
\end{equation*}

Now, we prove that $\sigma _{n}^{\bigtriangleup }$ is $p$-quasi-local. We
considere three casses. Let $a$ be an arbitrary atom with support $%
I_{N}(u)\times I_{N}(v)$. It is easy to see that $\sigma
_{n}^{\bigtriangleup }(a)=0$ if $n<2^{N}$. Therefore we can suppose that $%
n\geq 2^{N}$. Since the dyadic addition is a measure preserving group
operation, we may assume that $u=v=0.$

\textbf{Step 1. Integrating over} $\overline{I}_{N}\times \overline{I}_{N}$.
Since $\left\Vert a\right\Vert _{\infty }\leq c^{2N/p}$ from Lemma \ref%
{lemma4} we obtain%
\begin{equation}
\left\vert \sigma _{n}^{\bigtriangleup }a\left( x,y\right) \right\vert \leq
\left\Vert a\right\Vert _{\infty }\int\limits_{I_{N}\times I_{N}}\left\vert
K_{n}^{\bigtriangleup }\left( x+s,y+t\right) \right\vert d\mu \left(
s,t\right)  \label{p1}
\end{equation}%
\begin{equation*}
\leq \frac{c2^{2N/p}}{2^{3N}}\left\vert
\sum\limits_{k=1}^{q_{2}-1}D_{k}\left( x\right) D_{q_{2}-k}\left( y\right)
\right\vert
\end{equation*}%
\begin{equation*}
+\frac{c2^{2N/p}}{2^{3N}}\left\vert
\sum\limits_{k=q_{2}+1}^{2^{N}-1}D_{k}\left( x\right) D_{k-q_{2}}\left(
y\right) \right\vert ,n=2^{N}q_{1}+q_{2},0\leq q_{2}<2^{N}.
\end{equation*}

Applying the inequality%
\begin{equation*}
\left( \sum\limits_{k}a_{k}\right) ^{p}\leq \sum\limits_{k}a_{k}^{p},~\
\left( 0<p\leq 1\right)
\end{equation*}%
by (\ref{p1}) we have%
\begin{eqnarray*}
&&\left\vert \sigma _{n}^{\bigtriangleup }a\left( x,y\right) \right\vert ^{p}
\\
&\leq &\frac{c_{p}2^{2N}}{2^{3Np}}\left\{ \left( \left\vert
\sum\limits_{k=1}^{q_{2}-1}D_{k}\left( x\right) D_{q_{2}-k}\left( y\right)
\right\vert ^{p}+\left\vert \sum\limits_{k=q_{2}+1}^{2^{N}-1}D_{k}\left(
x\right) D_{k-q_{2}}\left( y\right) \right\vert ^{p}\right) \right\} .
\end{eqnarray*}%
Consequently, by Corollary \ref{cor1} \ and Corollary \ref{cor2} we can write%
\begin{eqnarray}
&&\int\limits_{\overline{I}_{N}\times \overline{I}_{N}}\left\vert \sigma
_{n}^{\bigtriangleup }a\left( x,y\right) \right\vert ^{p}d\mu \left(
x,y\right)  \label{main1} \\
&\leq &\frac{c_{p}2^{2N}}{2^{3Np}}\left\{ \sup\limits_{1\leq q_{2}\leq
2^{N}}\int\limits_{\overline{I}_{N}\times \overline{I}_{N}}\left\vert
\sum\limits_{k=1}^{q_{2}-1}D_{k}\left( x\right) D_{q_{2}-k}\left( y\right)
\right\vert ^{p}d\mu \left( x,y\right) \right.  \notag \\
&&\left. +\sup\limits_{1\leq q_{2}\leq 2^{N}}\int\limits_{\overline{I}%
_{N}\times \overline{I}_{N}}\left\vert
\sum\limits_{k=q_{2}+1}^{2^{N}-1}D_{k}\left( x\right) D_{k-q_{2}}\left(
y\right) \right\vert ^{p}d\mu \left( x,y\right) \right\}  \notag \\
&\leq &c_{p}<\infty ,\text{ \ \ }4/5<p\leq 1.  \notag
\end{eqnarray}

\textbf{Step 2. Integrating over} $\overline{I}_{N}\times I_{N}$. From Lemma %
\ref{lemma5} we obtain%
\begin{eqnarray*}
&&\left\vert \sigma _{n}^{\bigtriangleup }a\left( x,y\right) \right\vert \\
&\leq &\left\Vert a\right\Vert _{\infty }\int\limits_{I_{N}\times
I_{N}}\left\vert K_{n}^{\bigtriangleup }\left( x\dotplus s,y\dotplus
t\right) \right\vert d\mu \left( s,t\right) \\
&\leq &\frac{c2^{2N/p}}{2^{3N}}\left\{ \left\vert
\sum\limits_{k=1}^{q_{2}-1}D_{k}\left( x\right) D_{q_{2}-k}\left( y\right)
\right\vert +\left\vert \sum\limits_{k=q_{2}+1}^{2^{N}-1}D_{k}\left( x\right)
D_{k-q_{2}}\left( y\right) \right\vert \right. \\
&&\left. +D_{2^{N}}\left( y\right) \left\vert
\sum\limits_{k=1}^{2^{N}-1}D_{k}\left( x\right) \right\vert \right\} .
\end{eqnarray*}%
Consequently, from Lemma \ref{sup1}, Lemma \ref{lemma1} and Lemma \ref{sup3}%
, we get
\begin{eqnarray}
&&\int\limits_{\overline{I}_{N}\times I_{N}}\left\vert \sigma
_{n}^{\bigtriangleup }a\left( x,y\right) \right\vert ^{p}d\mu \left(
x,y\right)  \label{main2} \\
&\leq &\frac{c_{p}2^{N}}{2^{3Np}}\left\{ \int\limits_{\overline{I}%
_{N}}\sup\limits_{1\leq q_{2}\leq 2^{N}}\left\vert
\sum\limits_{k=1}^{q_{2}-1}D_{k}\left( x\right) \left( q_{2}-k\right)
\right\vert ^{p}d\mu \left( x\right) \right.  \notag \\
&&+\int\limits_{\overline{I}_{N}}\sup\limits_{1\leq q_{2}\leq
2^{N}}\left\vert \sum\limits_{k=q_{2}}^{2^{N}-1}D_{k}\left( x\right) \left(
k-q_{2}\right) \right\vert ^{p}d\mu \left( x\right)  \notag \\
&&\left. +2^{Np}\int\limits_{\overline{I}_{N}}\left\vert
\sum\limits_{k=1}^{2^{N}-1}D_{k}\left( x\right) \right\vert ^{p}d\mu \left(
x\right) \right\}  \notag \\
&\leq &\frac{c_{p}2^{N}}{2^{3Np}}2^{\left( 3p-1\right) N}=c_{p}<\infty \text{
}\left( 4/5<p\leq 1\right) .  \notag
\end{eqnarray}

\textbf{Step 3. Integrating over} $I_{N}\times \overline{I}_{N}$. The case
is analogous to step 2 and we have%
\begin{equation}
\int\limits_{I_{N}\times \overline{I}_{N}}\left\vert \sigma
_{n}^{\bigtriangleup }a\left( x,y\right) \right\vert ^{p}d\mu \left(
x,y\right) \leq c_{p}<\infty ~\text{\ }\left( 4/5<p\leq 1\right) .
\label{main3}
\end{equation}

Combining (\ref{main1}), (\ref{main2}) and (\ref{main3}) we complete the
proof of Theorem \ref{main}.
\end{proof}

\end{document}